\newcommand{\ab}{\allowbreak}
\newcommand{\alg}{\textrm{alg}}
\newcommand{\E}{\textrm{E}}
\newcommand{\goe}{{\sc goe}}
\newcommand{\im}{\mathrm{Im}}
\newcommand{\la}{\langle}
\let\phi=\varphi
\newcommand{\ra}{\rangle}
\newcommand{\sa}{\sphericalangle}
\newcommand{\str}{t}%\mathrm{t}}
\newcommand{\tr}{\textrm{tr}}
\newcommand{\Tr}{\textrm{Tr}}
\newcommand{\bC}{\mathbb{C}}
\newcommand{\bR}{\mathbb{R}}
\newcommand{\bZ}{\mathbb{Z}}
\newcommand{\cA}{\mathcal A}
\newcommand{\cN}{\mathcal N}
\newcommand{\cP}{\mathcal P}
\newcommand{\cX}{\mathcal X}
\newcommand\iraggedright{%
  \let\\\@centercr\@rightskip\@flushglue \rightskip\@rightskip
  \leftskip\z@skip}
\newcommand{\ds}{\displaystyle}
\newcommand{\thebottomline}{\renewcommand{\thefootnote}{}
  \renewcommand{\footnoterule}{}
  \phantom{M}\footnotetext{\tiny{}\hfill
    \textit{\noindent\romannumeral\day.%
\romannumeral\month.\romannumeral\year}}}
\newcommand\blfootnote[1]{%
  \begingroup
  \renewcommand\thefootnote{}\footnote{#1}%
  \addtocounter{footnote}{-1}%
  \endgroup
}
\theoremstyle{definition}
\newtheorem{theorem}{Theorem}
\newtheorem{definition}[theorem]{Definition}  
\newtheorem{lemma}[theorem]{Lemma}
\newtheorem{notation}[theorem]{Notation}
\newtheorem{remark}[theorem]{Remark}
\newtheorem{corollary}[theorem]{Corollary}
\newtheorem{example}[theorem]{Example}
\newtheorem{proposition}[theorem]{Proposition}
\title[Infinitesimal Distribution of the GOE]%
{non-crossing annular pairings and\\
The Infinitesimal Distribution of the GOE%
}
\author[j. a. mingo]{James A. Mingo} \address{Department
  of Mathematics and Statistics, Queen's University, Jeffery
  Hall, Kingston, Ontario, K7L 3N6, Canada}
\email{mingo@mast.queensu.ca} 
\thanks{Research supported by a Discovery Grant from the
  Natural Sciences and Engineering Research Council of
  Canada}
\begin{document}

\begin{abstract}
We present a combinatorial approach to the infinitesimal
distribution of the Gaussian orthogonal ensemble (\goe). In
particular we show how the infinitesimal moments are
described by non-crossing pairings, but not those of type
$B$. We demonstrate the asymptotic infinitesimal freeness of
independent complex Wishart matrices and compute their
infinitesimal cumulants. Using our combinatorial picture we
compute the infinitesimal cumulants of the \goe{} and
demonstrate the lack of asymptotic infinitesimal freeness of
independent Gaussian orthogonal ensembles. 
\end{abstract}

\maketitle

\section{Introduction}
\label{section:introduction}

Free independence was introduced by Dan Voiculescu in 1983
and since then there have been many extensions and
variations.  The common property of all these extensions is
that the mixed moments of independent random variables can
be computed by a universal rule from individual moments. The
rule depends on the type on independence being
considered. In this article we consider the infinitesimal
freeness of Belinschi and Shlyakhtenko
\cite{bs}. Infinitesimal probability spaces have recently
been used by Shlyakhtenko \cite{s} to understand small scale
perturbations in some random matrix models. Let us recall
some of the connections between free probability and random
matrix theory.\blfootnote{ AMS 2010 Mathematics Subject Classification: 46L54 (05D40 15B52 60B20)}

Let $\{A_N \}_N$ and $\{B_N\}_N$ be two self-adjoint
ensembles of random matrices. By this we mean that for each
integer $N \geq 1$ we have two self-adjoint matrices with
random entries. The eigenvalues of $A_N$, $\lambda^{(A)}_1
\leq \cdots \leq \lambda^{(A)}_N$, are thus random and we
form a random probability measure $\mu_N^{(A)}$ with a mass
of $1/N$ at each eigenvalue $\lambda^{(A)}_i$. We do the
same for $B_N$ and obtain another random measure
$\mu^{(B)}_N$. For many ensembles the random measures
$\mu^{(A)}_N$ and $\mu^{(B)}_N$ converge to deterministic
measures, called the limit eigenvalue distributions. Two
well known examples are Wigner's semi-circle law and the
Marchenko-Pastur law.

A central problem in random matrix theory is to compute the
limit eigenvalue distribution of $C_N = f(A_N, B_N)$ when
$f$ is a polynomial or a rational function in non-commuting
variables.  This would not be possible without some
assumptions on the `relative position' of $A_N$ and
$B_N$. By relative position we mean Voiculescu's notion of
freeness or one of its extensions. We do not need freeness
for finite $N$, but only in the large $N$ limit; when this
holds we say the ensembles are asymptotically free. When we
know that $A_N$ and $B_N$ are asymptotically free then we
can apply the analytic techniques of free probability
i.e. the $R$ and $S$ transforms (see \cite{vdn}) to compute
the limit distribution of $C_N$.

The first example of asymptotic freeness was given by
Voiculescu \cite{voi} where he showed that independent
self-adjoint Gaussian matrices were asymptotically
free. Since then there have been many generalizations and
elaborations.

Infinitesimal freeness is the branch of free probability
that enables us to model infinitesimal perturbations in the
same way as Voiculescu's theory did for $f(A_N, B_N)$. If we
start with $A_N$ as above but now assume that $B_N$ is a
non-random fixed finite rank self-adjoint matrix, recent
work of Shlyakhtenko \cite{s} and Belinschi and Shlyakhtenko
\cite{bs} shows that when $A_N$ is complex and Gaussian then
there is a universal rule for computing the effect on the
outlying eigenvalues. See Definition \ref{def:inf_freeness}
for a detailed definition. 

%The model that we use is to consider a pair $(\mu, \mu')$ of
%distributions; $\mu$ represents the limit distribution and
%$\mu' = \lim_N N(\mu - \mu_N)$,  represents the rate
%at which $\mu_N$ approaches
%$\mu$. More formally a

An infinitesimal distribution can be considered at the algebraic level or at the analytical level.
On the algebraic level an infinitesimal distribution is a pair
$(\mu, \mu')$ of linear functionals on $\bC[x]$ such that
$\mu(1) = 1$ and $\mu'(1) = 0$.  There are a few ways to
arrive at such a pair; we shall consider the ones arising
from random matrix models. Suppose $\{X_N\}_N$ is an
ensemble of self-adjoint random matrices where $X_N$ is $N
\times N$ and for all $k$ we have that the limit $\mu(x^k)
:= \lim_N \E(\tr(X_N^k))$ exists. Then the ensemble
$\{X_N\}_N$ has a limit distribution.  Suppose further that
for all $k$ we have $\mu'(x^k) := \lim_N N( \E(\tr(X_N^k)) -
\mu(x^k))$ exists. Then we say that the ensemble has a
\textit{infinitesimal distribution}. This was the context of
\cite{s}.

On the analytical level one can consider a pair $(\mu, \mu')$ 
of Borel measures on $\bR$ with $\mu$ being a probability measure
and $\mu'$ a signed measure with $\mu'(\bR) = 0$. 
An early example of an infinitesimal distribution
was that of the Gaussian orthogonal ensemble, given by
Johansson in \cite{kj}, also discussed by I.~Dumitriu and
A.~Edelman in \cite{de}, and Ledoux in \cite{l}. In this
case $\mu$ is Wigner's semi-circle law
\[
d\mu(x) = \frac{\sqrt{4 - x^2}}{2 \pi}\, dx \textrm{\ on\ }
[-2,2]
\]
and $\mu'$ is the difference of the Bernoulli and the
arcsine law:

\begin{equation}\label{eq:infinitesimal_goe}
d\mu'(x) = \frac{1}{2} \Big(\frac{\delta_{-2} + \delta_2}{2}
- \frac{1}{\pi}\frac{1}{\sqrt{4 - x^2}}\, dx\Big)
\textrm{\ on\ } [-2,2].
\end{equation}
Infinitesimal freeness was built on work of Biane, Goodman,
and Nica \cite{bgn} on freeness of type $B$.  While this
does provide a combinatorial basis for infinitesimal
freeness, we show in Theorem \ref{thm:inf_moments_diagrams}
that in the orthogonal, or `real' case, one needs to use the
annular diagrams of \cite{mn}. Since there is an additional
symmetry requirement (see the caption to
Fig. \ref{fig:into_fig}), we only need the outer half of the
diagram. This places infinitesimal freeness somewhere
between freeness and second order freeness.

Another example of an infinitesimal distribution was given
by Mingo and Nica in \cite[Corollary 9.4]{mn}, although is
was not then described as such because the infinitesimal
terminology didn't exist at the time. In \cite{mn} complex
Wishart matrices were considered. In particular $X_N =
\frac{1}{N}G^*G$ with $G$ a $M \times N$ Gaussian random
matrix with independent $\cN(0,1)$ entries.  When $\lim_N
M/N = c$ we get the well known Marchenko-Pastur
distribution with parameter $c$ (see
\cite[Ex. 2.11]{ms2017}). If we further assume that $c' :=
\lim_N (M - Nc)$ exists then there is an infinitesimal
distribution with $\mu'$ given by
\begin{equation}\label{eq:inf_sing_mom_cx_wish}
d\mu'(x) = -c'
\begin{cases}
\phantom{\frac{1}{2}}\delta_0 -  \frac{x + 1 -c}
{2 \strut\pi x \sqrt{(b - x)(x - a)}}\,dx & c < 1 \\
\frac{1}{2} \delta_0 -  \frac{1}{2\pi\sqrt{x(4 - x)}}\,dx & c = 1 \\
\phantom{\frac{1}{2} \delta_0}
-\frac{x + 1 -c}
{2 \strut\pi x \sqrt{(b - x)(x - a)}}\,dx  & c > 1 \\
\end{cases}.
\end{equation}
Note that the continuous part of $\mu'$ is supported on the
interval $[a, b]$ with $a = (1 - \sqrt{c})^2$ and $b = (1 +
\sqrt{c})^2$. In Remark \ref{rem:formal_derivative} we show that at a formal level we can consider $\mu'$ to be a derivative of $\mu$. However, in \cite{mn} the distribution was given in
terms of infinitesimal cumulants: $\kappa'_n = c'$ for all
$n$, where $\kappa_n'$ is an infinitesimal cumulant; the
density above is obtained from Equation
(\ref{eq:infinitesimal_r-transform}) below. The intuitive
idea is to regard $c'$ as the derivative, as $1/N
\rightarrow 0$, of the shape parameter $c$. For a very
simple case, take $c = 1$ and $c' \in \bZ$ an integer. We
let $M = N + c'$, then $M/N \rightarrow c$ and $M - cN =
c'$. Earlier authors only considered the case $c' = 0$,
which one can always arrange by taking $(M_k, N_k)$ to be
the $k^{\mathrm{th}}$ convergent in the continued fraction
expansion of $c$. 

Note that in Bai and Silverstein \cite{bai_s} and in the
work of many other authors, especially in statistics, a
different normalization is used for Wishart matrices. This
produces a slightly different limit distribution, related to
one used here by a simple change of variable.  See
\cite[Remark 2.12]{ms2017}.  So all of our results can be easily
transferred to the other normalization. Whenever clarity
permits we shall omit the dependence of our matrices on $N$,
thus in the expressions above we wrote $X$ instead of $X_N$.

\setbox1=\hbox{\includegraphics[scale=0.5]{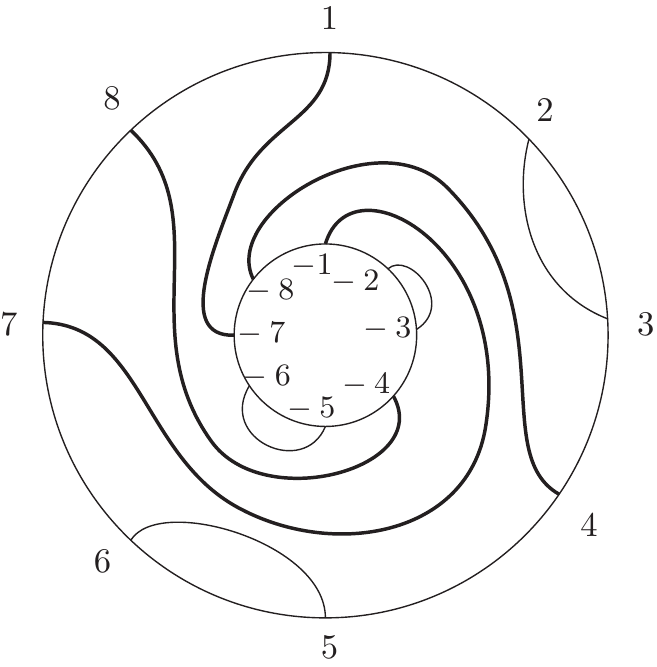}}

\begin{figure}\label{fig:into_fig}
$\vcenter{\hsize=\wd1\box1}$\qquad $\vcenter{\hsize =
    200pt\raggedright\small {\sc Figure \ref{fig:into_fig}} 
    \refstepcounter{figure}
    The planar objects are the
    non-crossing annular pairings of
    \cite{mn}, except in this case the circles have the same
    orientation. Moreover we require that $(r, -r)$ is never
    a pair and if $(r, s)$ is a pair then $(-r, -s)$ is also
    a pair. These are the only conditions.}$
\end{figure}

The $\frac{1}{N}$ expansion of $\E(\tr(X_N^n))$ in the
\goe{} case is known to count maps on locally orientable
surfaces (see \cite[Thm. 1.1]{gj} and \cite[\S 5]{l}). What
is new in this article is that the infinitesimal moments of
the \goe{} are described by planar objects and thus stay
within the class of the non-crossing partitions standard in
free probability, but not the non-crossing partitions of
type $B$ used in \cite{bgn}. We shall also see that independent
\goe{}'s are not asymptotically infinitesimally free, nor
are a \goe{} and a deterministic matrix. However there is a
universal rule for computing mixed moments (see Theorem
\ref{thm:lack_inf_free}).

Another new point in our presentation is the simple
relation: $g(z) = - r(G(z)) G'(z)$ between the infinitesimal
Cauchy transform $g$ and the infinitesimal
$r$-transform. This simplifies a number of our computations.

In \S \ref{section:infinitesimal_freeness} we present of
review of infinitesimal freeness and infinitesimal
cumulants. In \S
\ref{section:infinitesimal_moments_of_a_goe_random_matrix}
we find the combinatorial expression for the infinitesimal
moments. In \S \ref{section:inf_moments} we present the main combinatorial object of this paper, $NC_2^\delta(n, -n)$, as illustrated in Figure \ref{fig:into_fig}.
We show how the infinitesimal moments of the \goe{}
are described by these non-crossing partitions. In \S
\ref{section:inf_cum_goe} we use this description to find
the infinitesimal cumulants of the \goe{} and then show that
independent \goe{} matrices are not asymptotically
infinitesimally free. In \S \ref{section:asymptotic} we show
how the results of \cite[\S 9]{mn} give the infinitesimal
cumulants of a complex Wishart matrix and demonstrate
asymptotic infinitesimal freeness.  In \S
\ref{section:universal} we show that a \goe{} ensemble and
constant matrices are not asymptotically infinitesimally
free but do satisfy a universal law. This demonstrates the
difference between the complex and real case.

\section{Infinitesimal freeness}
\label{section:infinitesimal_freeness}
The theory of infinitesimal freeness and infinitesimal
cumulants is presented in \cite{bs}, \cite{bgn}, and
\cite{fn}. See also \cite{f}. We shall extract the parts
needed for our results.

We begin by recalling the moment-cumulant formula
(\cite[Lect. 11]{ns}). For a non-commutative probability
space $(\cA, \phi)$ and $a \in \cA$ we let $m_n = \phi(a^n)$
and call $\{m_n \}_n$ the \textit{moment sequence} of
$a$. Let us recall the usual way of constructing the free
cumulants $\{\kappa_n \}_n$. Suppose we have for each $n$ a
linear map $\kappa_n: \cA^{\otimes n} \rightarrow \bC$. We
can extend this to a sequence of maps indexed by partitions
by setting for $\pi \in \cP(n)$
\[
\kappa_\pi (a_1, \dots, a_n) =
\mathop{\prod_{V \in \pi}}_{V = (i_1, \dots , i_l)}
\kappa_l(a_{i_1}, \dots, a_{i_l}). 
\]
We then in turn use this to define $\{\kappa_n\}_n$ by the
relations
\begin{equation}\label{eq:moment-cumulant}
\phi(a_1 \cdots a_n) =
\sum_{\pi \in NC(n)} \kappa_\pi(a_1, \dots, a_n).
\end{equation}
This produces an inductive and recursive definition because
on the right hand side of (\ref{eq:moment-cumulant}) there
is only one term with a $\kappa_n$ and for all the others we
only need to know $\kappa_1, \dots, \kappa_{n-1}$.

Now let us recall the definition of an infinitesimal
probability space \cite{bs}. We start with a non-commutative
probability space $(\cA, \phi)$ and suppose we have $\phi':
\cA \rightarrow \bC$ with $\phi'(1) = 0$. We use the
infinitesimal version of (\ref{eq:moment-cumulant}) to
define the infinitesimal cumulants:
\begin{equation}\label{eq:inf_moment_cumulant}
\phi'(a_1 \cdots a_n) =
\sum_{\pi \in NC(n)} \partial\kappa_\pi(a_1, \dots, a_n)
\end{equation}
where the maps $\partial\kappa_\pi: \cA^{\otimes n} \rightarrow \bC$
are defined as follows.

Given a sequence of pairs $(\kappa_n, \kappa'_n)$ of linear maps $\kappa_n, \kappa'_n: \cA^{\otimes n}
\rightarrow \bC$ we define $\kappa'_{(\pi, V)}$ where
$\pi \in \cP(n)$ and $V \in \pi$ as follows. If $V = (i_1,
\dots, i_l)$ we set
\[ 
\kappa'_{\pi, V}(a_1, \dots, a_n) =
\kappa'_l(a_{i_1}, \dots, a_{i_l}) \kern-1em
\mathop{\mathop{\prod_{W \in \pi}}_{W \not = V}}_{W = (j_1,
  \dots, j_m)} \kern-1em \kappa_m(a_{j_1}, \dots, a_{j_m}).
\]
and
\begin{equation}\label{eq:diff_cumulant}
\partial\kappa_\pi(a_1, \dots, a_n)  =
\sum_{V \in \pi} \kappa'_{\pi, V}(a_1, \dots, a_n). 
\end{equation}
So given $(\phi, \phi')$ we produce a well defined sequence $\{ \kappa_n, \kappa'_n \}_n$ from  (\ref{eq:moment-cumulant}) 
and (\ref{eq:diff_cumulant}) as we did for the
free cumulants $\{\kappa_n\}_n$. We round out the notation by setting 
$\partial\kappa_n = \kappa'_n$. 

\begin{example}
Suppose we have an infinitesimal distribution such that
$\kappa_n = c$ for all $n$ and $\kappa'_n = c'$ for all
$n$. We are assuming that $c$ and $c'$ are real
numbers. Then $\partial \kappa_\pi = c'\cdot \#(\pi)\cdot
c^{\#(\pi) - 1}$, as for each $V \in \pi$ we have
$\kappa'_{\pi, V} = c' \cdot c^{\#(\pi) - 1}$ and there are
$\#(\pi)$ blocks $V$.
\end{example}

For use in \S \ref{section:universal}, we apply the
$\partial$ notation to $\phi$ by setting
\[
\partial \phi_\pi(a_1, \dots, a_n)
=
\sum_{V \in \pi} \phi_{\pi, V}(a_1, \dots, a_n),
\]
where, when $V = (i_1, \dots, i_k)$ we have 
\[
\phi_{\pi, V}(a_1, \dots, a_n)
=
\phi'(a_{i_1} \cdots a_{i_k})
\mathop{\prod_{W \not= V}}_{W = (j_1, \dots, j_l)}
\phi(a_{j_1} \cdots a_{j_l}).
\]
In this notation 
\begin{equation}\label{eq:infinitesimal_cumulant_moment}
\partial\kappa_\pi(a_1, \dots, a_n)
=
\mathop{\sum_{\sigma \in NC(n)}}_{\sigma \leq \pi} \mu(\sigma, \pi)
\partial\phi_\pi(a_1, \dots, a_n)
\end{equation}

We shall clarify these relations by looking at the cases $n =
1, 2$ and $3$.  For $n = 1$ we have
\[
\phi'(a_1) = \kappa'_1(a_1).
\]
So $\kappa'_1(a_1) = \phi'(a_1)$. For $n =2 $ we have 
\[
\phi'(a_1, a_2) = \kappa'_2(a_1, a_2) + \kappa'_1(a_1)
\kappa_1(a_2) + \kappa_1(a_1) \kappa'_1(a_2).
\]
Thus $\kappa'_2(a_1, a_2) = \phi'(a_1a_2) - \{\phi'(a_1)
\phi(a_2) + \phi(a_1) \phi'(a_2)\}   $. For $n = 3$ we have
\begin{eqnarray*}\lefteqn{
\phi'(a_1a_2a_3) = \kappa'_3(a_1, a_2, a_3) + \kappa'_1(a_1)
\kappa_2(a_2, a_3) + \kappa_1(a_1) \kappa'_2(a_2, a_3)
  }\\ && \mbox{} + \kappa'_1(a_2) \kappa_2(a_1, a_3) +
  \kappa_1(a_2) \kappa'_2(a_1, a_3)+ \kappa'_1(a_3)
  \kappa_2(a_1, a_2) \\ && \mbox{} + \kappa_1(a_3)
  \kappa'_2(a_1, a_2) + \kappa'_1(a_1) \kappa_1(a_2)
  \kappa_1(a_3) + \kappa_1(a_1) \kappa'_1(a_2)
  \kappa_1(a_3)\\ && \mbox{} + \kappa_1(a_1) \kappa_1(a_2)
  \kappa'_1(a_3).
\end{eqnarray*}
From which we conclude that
{\setlength\arraycolsep{2pt}
\begin{eqnarray*}\lefteqn{
\kappa'_3(a_1, a_2, a_3) = \phi'(a_1a_2a_3)} \hspace{-1em}\\
&& \mbox{}
-\{ 
\phi'(a_1) \phi(a_2a_3) + \phi(a_1) \phi'(a_2a_3) \\
&& \mbox{}
+ \phantom{\{}
\phi'(a_2) \phi(a_1a_3) + \phi(a_2) \phi'(a_1a_3) \\
&& \mbox{}
+ \phantom{\{}
\phi'(a_3) \phi(a_1a_2) + \phi(a_3) \phi'(a_1a_2) \}\\
\mbox{}
&& \mbox{}  +
2 \{ \phi'(a_1) \phi(a_2) \phi(a_3) + \phi(a_1) \phi'(a_2)
\phi(a_3) + \phi(a_1) \phi(a_2) \phi'(a_3) \}. \\
\end{eqnarray*}}
These examples are special cases of the M\"obius inversion
of Eq.~(\ref{eq:inf_moment_cumulant})
\[
\kappa'_n(a_1, \dots, a_n) =
\sum_{\pi \in NC(n)} \mu(\pi, 1_n) \ 
\partial\phi_\pi(a_1, \dots a_n).
\]
When all the random variables are the same we can just write
everything in terms of $\{m_n , m'_n\}_n$ and $\{\kappa_n,
\kappa'_n\}_n$. If $\pi$ has blocks of size $k_1, k_2,
\dots, k_l$ we can write, using the notation of equation
(\ref{eq:diff_cumulant}),
\[
\partial m_\pi = \sum_{p=1}^l m_{k_1} \cdots m_{k_{p-1}} m'_{k_p}
m_{k_{p+1}} \cdots m_{k_l}
\]
which is the Leibniz rule applied to 
\[
m_\pi = \prod_{p=1}^n m_{k_p}. 
\]
Recall that the Cauchy transform of $\mu$ is given by
\[
G(z) = \sum_{n=0}^\infty \frac{m_n}{z^{n+1}}
= \int_{\bR} (z - t)^{-1}\, d\mu(t).
\] 
If the corresponding cumulants are $\{\kappa_n\}_n$ then the
$R$-transform is
\[
R(z) = \kappa_1 + \kappa_2 z + \kappa_3 z^3 + \cdots.
\]
The Cauchy transform and the $R$-transform are related by
the Voicu\-lescu equations
\begin{equation}\label{eq:voiculescu_relation}
\frac{1}{G(z)} + R(G(z)) = z = G\Big(\frac{1}{z} + R(z)\Big).
\end{equation}
In the infinitesimal case we proceed as in
\cite[Thm. 6]{bs}. For $z, w \in \bC$ we let $Z$ be the
matrix
\[
Z = \begin{pmatrix}z & w \\ 0 & z\end{pmatrix}. 
\]
Then
\[ 
Z^{n} = \begin{pmatrix}z^n & nz^{n-1} w \\ 0 & z^n\end{pmatrix} 
\mathrm{\ and\ } 
Z^{-n} = \begin{pmatrix}z^{-n} & -nz^{-(n+1)} w \\ 0 & z^{-n}\end{pmatrix}.
\]
To create the infinitesimal Cauchy and $R$-transform we set
\[
M_n = \begin{pmatrix} m_n & m'_n \\ 0 & m_n \end{pmatrix}
\mathrm{\ and\ }
K_n = 
\begin{pmatrix} \kappa_n & \kappa'_n \\ 0 & \kappa_n \end{pmatrix}.
\]
Then we let
\begin{eqnarray*}
G(Z) &=& \sum_{n=0}^\infty M_n Z^{-(n+1)} \\
& = &
\sum_{n=0}^\infty 
\begin{pmatrix} \ds\frac{m_n}{z^{n+1}} & \ds\frac{-(n+1) m_n}{ z^{n+2}}w  + \frac{m'_n}{z^{n+1}} \\ 
               0 & \ds\frac{m_n}{z^{n+1}} \end{pmatrix} \\
& = & 
\begin{pmatrix} \ds\sum_{n=0}^\infty \frac{m_n}{z^{n+1}} 
& \ds w\sum_{n=0}^\infty \frac{-(n+1) m_n}{z^{n+2}} 
+ \ds\sum_{n=0}^\infty \frac{m'_n}{z^{n+1}} \\ 
0 & \ds\sum_{n=0}^\infty \frac{m_n}{z^{n+1}} \end{pmatrix} \\
& = & 
\begin{pmatrix} G(z) & G'(z) w + g(z) \\ 0 & G(z) \end{pmatrix}.              
\end{eqnarray*}
Here $g$ is  the \textit{infinitesimal Cauchy transform}
\[
g(z) = \frac{m'_1}{z^2} + \frac{m'_2}{z^3} +
\cdots = \int_{\bR} (z - t)^{-1}\, d\mu'(t)
\]
and $G' = \ds\frac{dG}{dz}$. Likewise we set 
\begin{eqnarray*}
R(Z) &=& \sum_{n=1}^\infty K_n Z^{n-1} \\
& = &
\sum_{n=1}^\infty 
\begin{pmatrix} \kappa_n & \kappa'_n \\ 0 & \kappa_n \end{pmatrix}
\begin{pmatrix}z^{n-1} & (n-1)z^{n-2} w \\ 0 & z^{n-1}\end{pmatrix} \\
& = &
\sum_{n=1}^\infty 
\begin{pmatrix} \kappa_n z^{n-1} & 
(n-1)\kappa_n z^{n-2} w + \kappa'_n z^{n-1}\\ 0 & \kappa_n z^{n-1}\end{pmatrix} \\
& = & 
\begin{pmatrix} R(z) & R'(z) w + r(z) \\ 0 & R(z) \end{pmatrix}
\end{eqnarray*}
where $r$ the infinitesimal $r$-transform
\[
r(z) = \kappa'_1 + \kappa'_2 z + \kappa'_3 z^2 + \cdots
\]
and $R' = \ds \frac{dR}{dz}$. The infinitesimal versions of
the Voiculescu equations (\ref{eq:voiculescu_relation}) are
\[
\big(G(Z)\big)^{-1} + R(G(Z)) = Z = R(Z^{-1} + G(Z)).
\]
Let us use this to find the relation between $r$ and $g$,
the infinitesimal versions of $R$ and $G$.  First
\[ \big(G(Z)\big)^{-1}
= \begin{pmatrix}
G(z)^{-1} & -G(z)^{-2}[ w G'(z) + g(z)] \\ 0 & G(z)^{-1} \\
\end{pmatrix}. 
 \]
Next
\[
R(G(Z)) =
\begin{pmatrix}
R(G(z)) & [wG'(z) + g(z)] R'(G(z)) + r(G(z)) \\
0 & R(G(z)) \\
\end{pmatrix}.
\]
Thus
\begin{eqnarray*}\lefteqn{
\begin{pmatrix}z&w\\0&z\end{pmatrix}= Z = \big( G(Z)\big)^{-1} + R(G(Z)) }\\
& = &
\begin{pmatrix}
G(z) ^{-1} + R(G(z)) &
[wG'(z) + g(z)] [-G(z)^{-2} + R'(G(z))]  \\
  & + r(G(z)) \\
0 & G(z) ^{-1} + R(G(z)) \\
\end{pmatrix}.
\end{eqnarray*}
Hence
\begin{eqnarray*}
w &=& [wG'(z) + g(z)] [-G(z)^{-2} + R'(G(z))] + r(G(z)) \\
& = &
w G'(z) [-G(z)^{-2} + R'(G(z))] 
+ g(z) [-G(z)^{-2} + R'(G(z))] \\
&& \qquad\mbox{} + r(G(z)) \\
& = &
w + g(z) [G'(z)]^{-1} + r(G(z)),
\end{eqnarray*}
where we have used the derived Voiculescu relation
\[
G'(z) [-G(z)^{-2} + R'(G(z))] = 1.
\]
\begin{theorem}
The infinitesimal Cauchy and $r$-transforms are related by
the equations
\begin{equation}\label{eq:infinitesimal_r-transform}
g(z) = - r(G(z)) G'(z)
\end{equation}
and
\[
r(z) = -g\big(K(z)\big) K'(z)
\]
where $K(z) = \ds\frac{1}{z} + R(z) = G^{\langle-1\rangle}(z)$ and
$K' = \ds\frac{dK}{dz}$. 
\end{theorem}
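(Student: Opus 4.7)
The plan is to recognize that the first identity has already been derived in the paragraphs immediately preceding the statement, and then to obtain the second identity by running the same argument with the roles of $G$ and $K$ interchanged via compositional inversion. Specifically, from the matrix Voiculescu equation $(G(Z))^{-1} + R(G(Z)) = Z$ the excerpt already extracts the upper-right entry
\[
w \;=\; w\,G'(z)[-G(z)^{-2} + R'(G(z))] + g(z)[-G(z)^{-2} + R'(G(z))] + r(G(z)),
\]
and, combining this with the auxiliary identity $G'(z)[-G(z)^{-2} + R'(G(z))] = 1$, reduces it to $w = w + g(z)[G'(z)]^{-1} + r(G(z))$. Solving gives $g(z) = -r(G(z))\,G'(z)$. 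So the first equation needs only that this computation be repackaged as the formal proof.

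For the second equation I would use the dual matrix Voiculescu equation $G(K(Z)) = Z$, where $K(Z) := Z^{-1} + R(Z)$. A direct computation, using
\[
Z^{-1} = \begin{pmatrix} 1/z & -w/z^2 \\ 0 & 1/z \end{pmatrix}, \qquad K'(z) = -\frac{1}{z^2} + R'(z),
\]
together with the formula for $R(Z)$ already established, shows that $K(Z)$ is again upper triangular of the special form
\[
K(Z) \;=\; \begin{pmatrix} K(z) & w K'(z) + r(z) \\ 0 & K(z) \end{pmatrix}.
\]
Applying $G$ to this matrix by the same power-series bookkeeping the excerpt uses to compute $G(Z)$ then produces
\[
G(K(Z)) \;=\; \begin{pmatrix} G(K(z)) & [w K'(z) + r(z)]\,G'(K(z)) + g(K(z)) \\ 0 & G(K(z)) \end{pmatrix}.
\]
Setting this equal to $Z$, the diagonal reproduces the scalar identity $G(K(z)) = z$; differentiating that identity in $z$ gives $G'(K(z))\,K'(z) = 1$. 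Substituting into the $(1,2)$-entry and cancelling $w$ yields $r(z) = -g(K(z))\,K'(z)$, which is the second claim.

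I expect no real obstacle here. The whole theorem is an instance of the general principle that, for any formal power series $f$ with infinitesimal analog $\widetilde f$, the map $Z \mapsto f(Z)$ sends an upper-triangular matrix $\bigl(\begin{smallmatrix} a & b \\ 0 & a \end{smallmatrix}\bigr)$ to $\bigl(\begin{smallmatrix} f(a) & b f'(a) + \widetilde f(a) \\ 0 & f(a) \end{smallmatrix}\bigr)$; so once this pattern is installed, both halves of the theorem reduce to a three-line $2 \times 2$ matrix calculation together with one application of the chain rule. The only place one needs to be slightly careful is in differentiating $G(K(z)) = z$ to replace $G'(K(z))$ by $1/K'(z)$; after that, the cancellation of $w$ is automatic.
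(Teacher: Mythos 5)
Your proof is correct and follows essentially the same approach as the paper: extract the $(1,2)$-entry of the $2\times 2$ matrix Voiculescu relation and apply the chain rule via the relation $G'(z)[-G(z)^{-2}+R'(G(z))]=1$ (equivalently $G'(K(z))K'(z)=1$). The paper only explicitly derives the first equation and states the theorem immediately afterward; your calculation for the second equation using $G(K(Z))=Z$ is the natural completion using the same matrix bookkeeping, and it also implicitly corrects a typo in the paper's statement of the infinitesimal Voiculescu relation, which reads $Z = R(Z^{-1}+G(Z))$ but should be $Z = G(Z^{-1}+R(Z))$ to match the scalar version.
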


\begin{remark}
Note that for the infinitesimal versions, $g$ and $r$, we
don't have to solve an equation to get one from the
other. This is one similarity with second order freeness
where the second order Cauchy and $R$-transforms are
related by
\begin{multline*}
G(z, w) = R(G(z), G(w)) G'(z) G'(w) \\
 \mbox
+ \frac{\partial^2}{\partial z \partial w} \log \Big(
\frac{1/G(z) - 1/G(w)}{z - w}\Big)
\mbox{\ (see \cite[Ch. 5]{ms2017} and \cite[Cor.~6.4]{cmss}). }
\end{multline*}
\end{remark}

Let us recall the notions of asymptotic freeness from
\cite{fn} that we shall use. First we shall give the
original definition and an equivalent formulation, which
will be what we actually use in this paper.

\begin{definition}\label{def:inf_freeness}
Let $(\cA, \phi, \phi')$ be an infinitesimal probability
space and $\cA_1, \dots, \cA_s$ be unital subalgebras. We
say that the subalgebras $\cA_1, \dots,\ab \cA_s$ are
\textit{infinitesimally free} if for all $a_1, \dots, a_n
\in \cA$ with $\phi(a_i) =0$ for $i = 1, \dots, n$ and $a_i
\in \cA_{j_i}$ with $j_1 \not = j_2 \not = \cdots \not =
j_{n - 1} \not = j_n$ we have
\begin{enumerate}

\item $\phi(a_1 \cdots a_n) = 0$

\item $\phi'(a_1 \cdots a_n) = 0$ for $n$ even and for $n =
  2m + 1$ odd we have
\[
\phi'(a_1 \cdots a_n)
= \phi(a_1a_n) \phi(a_2 a_{n-1}) \cdots \phi(a_ma_{m+2})
\phi'(a_{m+1}). 
\]
\end{enumerate}
\end{definition}

We extend this definition to individual random variables in
the usual way.

\begin{definition}
Let $(\cA, \phi, \phi')$ be an infinitesimal probability
space. Suppose we are given elements $x_1, \dots, x_s \in \cA$ and let
$\cA_i = \alg(1, x_i)$ be the algebra generated by $1$ and
$x_i$. We say that the elements $x_ 1, \dots , x_s$ are
\textit{infinitesimally free} if the subalgebras $\cA_1,
\dots, \cA_s$ are infinitesimally free.
\end{definition}

We shall obtain our asymptotic freeness results using the
characterization of infinitesimal freeness in terms of
cumulants. 

\begin{definition}
Let $(\cA, \phi, \phi')$ be an infinitesimal probability
space and $x_1, \dots, x_s \in \cA$. Suppose that for all
$n$-tuples $i_1, \dots, i_n \in [s]$ such that they are not
all equal we have both $\kappa_n(x_{i_1}, \dots, x_{i_n}) =
0$ and $\kappa'_n(x_{i_1}, \dots, x_{i_n}) = 0$. Then we say
\textit{mixed cumulants vanish}.
\end{definition}

\begin{theorem}[\cite{fn} Cor. 4.8]\label{thm:vanishing_mixed_cumulants}
Let $(\cA, \phi, \phi')$ be an infinitesimal probability
space and $\cX_1, \dots, \cX_s$ be subsets of $\cA$. Then
$\cX_1, \dots, \cX_s$ are infinitesimally free if and only
if mixed cumulants vanish.
\end{theorem}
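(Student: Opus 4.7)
I would prove both implications by Möbius-inverting the moment-cumulant formulas (\ref{eq:moment-cumulant}) and (\ref{eq:inf_moment_cumulant}) and then running the standard monochromatic-partition analysis on alternating centered tuples, in parallel for $\kappa_n$ and $\kappa'_n$. The essential tool is the Möbius inversion
\[
\kappa'_n(x_{i_1}, \dots, x_{i_n}) = \sum_{\pi \in NC(n)} \mu(\pi, 1_n) \, \partial\phi_\pi(x_{i_1}, \dots, x_{i_n}),
\]
together with the classical one for $\kappa_n$.

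\textbf{Reverse direction.} Assume mixed $\kappa_n$ and $\kappa'_n$ both vanish. Given $a_k \in \cX_{j_k}$ centered with $j_1 \neq j_2 \neq \cdots \neq j_n$, I verify clauses (i) and (ii) of Definition~\ref{def:inf_freeness}. Clause (i) is Speicher's classical statement: in $\phi(a_1 \cdots a_n) = \sum_\pi \kappa_\pi$ only $j$-monochromatic $\pi$ survive, centeredness rules out singleton blocks, and no non-crossing monochromatic partition with all blocks of size $\geq 2$ fits an alternating pattern. For clause (ii), expand $\phi'(a_1 \cdots a_n) = \sum_{\pi \in NC(n)} \sum_{V \in \pi} \kappa'_{\pi, V}$ via (\ref{eq:diff_cumulant}); monochromaticity of $\pi$ again kills most terms, and centeredness forces any singleton block $W \neq V$ to contribute $\kappa_1(a_i) = 0$, so the distinguished block $V$ must itself be the unique singleton. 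For $n$ even no such partition exists on the alternating tuple; for $n = 2m+1$ the unique surviving $\pi$ is the nested pairing $(1,n)(2,n-1) \cdots (m, m+2)$ with $V = \{m+1\}$, contributing
\[
\kappa_2(a_1, a_n) \cdots \kappa_2(a_m, a_{m+2}) \, \kappa'_1(a_{m+1}) = \phi(a_1 a_n) \cdots \phi(a_m a_{m+2}) \, \phi'(a_{m+1})
\]
by centeredness, matching clause (ii).

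\textbf{Forward direction and main obstacle.} Assume infinitesimal freeness. Clause (i) of Definition~\ref{def:inf_freeness} is Voiculescu's freeness for $(\cA, \phi)$, so Speicher's classical theorem already gives vanishing of mixed $\kappa_n$. For $\kappa'_n$, induct on $n$: by multilinearity reduce to centered arguments, by the standard merging of consecutive equal-index arguments reduce to the alternating case, and then apply clause (ii) inside the Möbius inversion, using the inductive hypothesis that $\kappa_m, \kappa'_m$ vanish on mixed arguments for $m < n$ to collapse the sum over $\pi \neq 1_n$ to zero. The main technical obstacle is the combinatorial case analysis that singles out the nested pairing as the sole contributor on alternating centered tuples; it parallels the classical proof of Speicher's theorem but with the extra bookkeeping of $\partial\kappa_\pi$. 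A cleaner alternative I would adopt in the final write-up, and which matches the treatment in \cite{fn}, is the dual-number device: let $R = \bC[\varepsilon]/\langle \varepsilon^2 \rangle$, form the $R$-algebra $\widetilde{\cA} = \cA \otimes_\bC R$, and define the $R$-valued expectation $\widetilde\phi(a) = \phi(a) + \varepsilon \phi'(a)$ for $a \in \cA$, extended $R$-linearly. A short computation gives $\widetilde\kappa_n(a_1, \dots, a_n) = \kappa_n(a_1, \dots, a_n) + \varepsilon \, \kappa'_n(a_1, \dots, a_n)$, and Speicher's operator-valued cumulant theorem applied to $(\widetilde{\cA}, \widetilde\phi)$ is exactly the simultaneous vanishing statement required.
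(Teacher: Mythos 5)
The paper does not actually prove this statement; it imports it directly from F\'evrier--Nica \cite{fn}, Corollary 4.8, and cites it without argument. So there is no in-paper proof to compare against, and your proposal must be judged on its own.

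Your plan has the right skeleton, but in both directions you assert exactly the steps that carry the mathematical content. In the reverse direction, the claim that on an alternating centered tuple \emph{the unique surviving $\pi$ is the nested pairing} needs to be argued, not announced: you should first observe that for a monochromatic non-crossing $\pi$ on an alternating colour word every interval block is necessarily a singleton (so in particular $\pi$ always has at least one singleton, which is not obvious from what you wrote), and then run a short induction showing that having exactly one singleton forces $\pi = \{1,n\}\{2,n{-}1\}\cdots\{m,m{+}2\}\{m{+}1\}$, with the singleton in the middle position --- at which point parity gives the vanishing for $n$ even and monochromaticity of the pairs $\{i,n{+}1{-}i\}$ reproduces clause (ii) (and if any $j_i\neq j_{n+1-i}$ both sides are $0$). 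In the forward direction, the identity $\widetilde\kappa_n=\kappa_n+\varepsilon\kappa'_n$ over $R=\bC[\varepsilon]/\langle\varepsilon^2\rangle$ is correct and is the right key observation, but the sentence ``Speicher's operator-valued cumulant theorem applied to $(\widetilde\cA,\widetilde\phi)$ is exactly the simultaneous vanishing statement required'' silently identifies $R$-valued freeness of the $R$-subalgebras $\widetilde\cA_i=\cA_i\otimes_\bC R$ with infinitesimal freeness in the sense of Definition~\ref{def:inf_freeness}, and these are \emph{not} literally the same condition: $R$-valued centering means $\widetilde\phi(a_i)=0$, i.e.\ $\phi(a_i)=0$ \emph{and} $\phi'(a_i)=0$, whereas Definition~\ref{def:inf_freeness} only centers with respect to $\phi$. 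To close the gap you must re-center $b_i=a_i-\varepsilon\phi'(a_i)1\in\widetilde\cA_{j_i}$, expand $\widetilde\phi(b_1\cdots b_n)=0$, and then use clause (i) (classical freeness) repeatedly to collapse $\sum_k\phi'(a_k)\phi(a_1\cdots\widehat{a_k}\cdots a_n)$ into the palindromic product $\phi(a_1a_n)\cdots\phi(a_ma_{m+2})\phi'(a_{m+1})$ for $n$ odd and $0$ for $n$ even. That telescoping is precisely the nontrivial content of F\'evrier--Nica's result, so presenting the dual-number reduction as ``exactly the required statement'' is too quick; it is the right device, but the reconciliation of the two freeness notions is the proof, not a footnote to it.
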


\section{The infinitesimal moments of a goe random matrix}
\label{section:infinitesimal_moments_of_a_goe_random_matrix}
In this section we make precise the notation we shall use 
to describe \goe{} random matrices. 

\begin{notation}\label{notation:1}
Let $G = \frac{1}{\sqrt{N}} (g_{ij})_{ij}$ with $\{
g_{ij}\}_{ij}$ independent identically distributed $\cN(0,
1)$ random variables and $X = \frac{1}{\sqrt{2N}}(G +
G^\str)$.  Then $X$ is a $N \times N$ \goe{} random
matrix. 

\begin{remark}\label{rem:independent_sum}
Note that the linear combination $\frac{1}{\sqrt{2}}(X_N +
Y_N)$ of two independent \goe{} random matrices $\{X_N\}_N$
and $\{Y_N\}_N$ is again a \goe{} random matrix. Indeed if
$X = \frac{1}{\sqrt{2N}}( G_1 + G_1^\str)$ and $Y =
\frac{1}{\sqrt{2N}}( G_2 + G_2^\str)$, then let $G_3 =
\frac{1}{\sqrt{2}} (G_1 + G_2)$. Then the entries of $G_3$
are independent $\cN(0,1)$ random variables, so $Z =
\frac{1}{\sqrt{2N}}(G_3 + G_3^\str)$ is a \goe{} random
matrix.
\end{remark}

We will denote by $\tr$ the normalized trace of a $N
\times N$ matrix. Our goal in this section is to compute in
terms of planar diagrams the $1/N$ term of $\E(\tr(X^n))$
for each $n$. We shall see that for $n$ odd we have
$\E(\tr(X^n)) = 0$.

\begin{center}
\begin{tabular}{rl}
$n$ & $\E(\tr(X^n))$ \\ [2pt]\hline
2  & $1 + N^{-1}$ \vrule width 0pt height 12pt\\
4 & $2 + 5 N^{-1} + 5 N^{-2}$\\
6 & $5 + 22 N^{-1} + 52 N^{-2} + 41 N^{-3}$ \\
8 & $14 + 93N^{-1} + 374 N^{-2} + 690 N^{-3} + 509 N^{-4}$ \\
10 & $42 + 386 N^{-1} + 2290 N^{-2} + 7150 N^{-3} + 12143 N^{-4} + 8229 N^{-5}$
\end{tabular}
\end{center}
\noindent
The constant terms are the familiar Catalan numbers; the
coefficients of $N^{-1}$ are the moments of the $\mu'$ in
Eq. (\ref{eq:infinitesimal_goe}).

In this paper we shall frequently use the following
notation: for any matrix $A$ we set $A^{(-1)} = A^\str$ and
$A^{(1)} = A$.

Most of our calculations will be in $S_n$, the symmetric
group on $[n] = \{1, 2, 3, \dots, n\}$. Let $\gamma = (1, 2,
3, \dots, n) \in S_n$ be the permutation with one cycle.

We let $[\pm n] = \{ 1, 2,3, \dots, n, -n, -(n-1), \dots,
-1\}$ and $S_{\pm n}$ the permutations of $[\pm n]$. We
embed $S_n$ into $S_{\pm n}$ by making $\pi \in S_n$ act
trivially on $\{ -n, -(n-1), \dots, -1\}$. We let $\delta
\in S_{\pm n}$ be the permutation $\delta(k) = -k$ for all
$k \in [\pm n]$. For any permutation $\pi$ we let $\#(\pi)$
denote the number of cycles of $\pi$. Note that $\#(\pi
\sigma) = \#(\sigma \pi)$. If the subgroup $\langle \pi,
\sigma \rangle$ generated by $\pi$ and $\sigma$ acts
transitively on $[n]$ then there is an integer $ g \geq 0$
(the genus of a certain surface) such that
\begin{equation}\label{eq:euler_characteristic}
\#(\pi) + \#(\sigma\pi^{-1}) + \#(\sigma) = n + 2(1 - g).
\end{equation}
This is Euler's equation for the Euler characteristic of the
corresponding surface. Any permutation $\pi$ is
automatically considered a partition whose blocks are the
cycles of $\pi$. The partition will be non-crossing if and
only if
\[
\#(\pi) + \#(\pi^{-1}\gamma) = n + 1.
\]

We set $\bZ_2 = \{-1, 1\}$, if $\epsilon \in \bZ_2^n$ we
write $\epsilon = (\epsilon_1, \dots, \epsilon_n)$. We shall
also regard $\epsilon$ as a permutation in $S_{\pm n}$ as
follows. If $k \in [\pm n]$ we let $\epsilon(k) =
\epsilon_{|k|} k$. If $\epsilon = (-1, -1, \dots, -1)$ then
$\epsilon = \delta$. As permutations $\epsilon$ and $\delta$
commute.

A partition is a \textit{pairing} if all its blocks have 2
elements.  $\cP_2(n)$ is the set of pairings of $[n]$ (empty
of $n$ is odd).

Given $j: [\pm n] \rightarrow [N]$ we let $\ker(j)$ be the
partition of $[\pm n]$ such that $j$ is constant on the
blocks of $\ker(j)$ and takes on different values on
different blocks. If $\ker(j) \geq \gamma \delta
\gamma^{-1}$ then $j_{-1} = j_2$, $j_{-2} = j_3$, \dots,
$j_{-n} = j_1$. If $P$ is a true/false proposition depending on
a variable $x$ we write $\mathds{1}_P$ to be the function 
\[
\mathds{1}_P(x) =
\begin{cases}
1 & P(x) \mathrm{\ is\ true} \\
0 & P(x) \mathrm{\ is\ false}
\end{cases}.
\]
Thus for our Gaussian matrix $G = (g_{ij})_{ij}$ we have the Wick formula
\[
\E(g_{i_1i_{-1}} \cdots g_{i_ni_{-n}})
=
\sum_{\pi \in \cP_2(n)}
\mathds{1}_{\ker(i) \geq \pi\delta\pi\delta}.
\]
\end{notation}

\begin{lemma}\label{lemma:1}
\begin{equation}\label{equation:1}
\E(\tr(X^n)) =  \kern-0.5em
  \sum_{\pi \in \cP_2(n)} \sum_{\epsilon \in
  \bZ_2^n} 2^{-n/2} N^{ \#( \epsilon \gamma \delta \gamma^{-1}
  \epsilon \vee \pi \delta \pi \delta) -(n/2 + 1)}
\end{equation}
\end{lemma}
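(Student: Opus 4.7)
The plan is to expand the $n$-th power of $X$, write the trace as a sum indexed by functions $j\colon[\pm n]\to[N]$, apply the Wick formula to the Gaussian entries, and then reorganize the resulting sum into the stated form.

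\medskip\noindent\textbf{Step 1: expand $X^{n}$ over $\epsilon$.} Since $X=\frac{1}{\sqrt{2N}}(G+G^{\str})=\frac{1}{\sqrt{2N}}\bigl(G^{(1)}+G^{(-1)}\bigr)$, multiplying out gives
\[
X^{n}=\frac{1}{(2N)^{n/2}}\sum_{\epsilon\in\bZ_{2}^{n}}
G^{(\epsilon_{1})}G^{(\epsilon_{2})}\cdots G^{(\epsilon_{n})}.
\]
Taking the normalized trace and writing things in entries,
\[
\tr(X^{n})=\frac{1}{(2N)^{n/2}}\cdot\frac{1}{N}\sum_{\epsilon}\sum_{i_{1},\dots,i_{n}=1}^{N}
G^{(\epsilon_{1})}_{i_{1}i_{2}}\cdots G^{(\epsilon_{n})}_{i_{n}i_{1}}.
\]

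\medskip\noindent\textbf{Step 2: reindex by $j\colon[\pm n]\to[N]$.} Set $j_{k}=i_{k}$ and $j_{-k}=i_{k+1}$ (indices read mod $n$). The constraints $j_{-k}=j_{k+1}$ are exactly the pairs of the involution $\gamma\delta\gamma^{-1}$, so the admissible $j$'s are those with $\ker(j)\geq\gamma\delta\gamma^{-1}$. A direct check shows that for either value of $\epsilon_{k}$,
\[
G^{(\epsilon_{k})}_{i_{k}i_{k+1}}=g_{\,j_{\epsilon(k)},\,j_{\epsilon(-k)}},
\]
using the convention $\epsilon(k)=\epsilon_{|k|}k$. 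Consequently
\[
\E\tr(X^{n})=\frac{2^{-n/2}}{N^{n/2+1}}\sum_{\epsilon}\sum_{j:\,\ker(j)\geq\gamma\delta\gamma^{-1}}
\E\Bigl(\prod_{k=1}^{n}g_{\,j_{\epsilon(k)},\,j_{\epsilon(-k)}}\Bigr).
\]
(Here I am interpreting $G$ as the matrix of $g_{ij}$'s; the $1/\sqrt{N}$ and $1/\sqrt{2N}$ factors combine in the denominator to $N^{n/2+1}$ and $2^{n/2}$ respectively.)

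\medskip\noindent\textbf{Step 3: apply Wick and conjugate.} Setting $\widetilde\imath_{k}=j_{\epsilon(k)}$ for $k\in[\pm n]$ and applying the Wick formula gives
\[
\E\Bigl(\prod_{k}g_{\widetilde\imath_{k}\widetilde\imath_{-k}}\Bigr)
=\sum_{\pi\in\cP_{2}(n)}\mathds{1}_{\ker(\widetilde\imath)\geq\pi\delta\pi\delta}.
\]
Now $\ker(\widetilde\imath)=\ker(j\circ\epsilon)\geq\pi\delta\pi\delta$ is equivalent to $\ker(j)\geq\epsilon\,\pi\delta\pi\delta\,\epsilon^{-1}$, because $j$ being constant on $\{\epsilon(a):a\in V\}$ for every block $V$ of $\pi\delta\pi\delta$ means $j$ is constant on the blocks of the conjugate partition.

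\medskip\noindent\textbf{Step 4: count and finish.} For any $\sigma\in S_{\pm n}$ the number of $j\colon[\pm n]\to[N]$ with $\ker(j)\geq\sigma$ equals $N^{\#(\sigma)}$. Combining the two constraints, the inner count becomes
\[
\#\bigl\{j:\ker(j)\geq\gamma\delta\gamma^{-1}\vee\epsilon\,\pi\delta\pi\delta\,\epsilon^{-1}\bigr\}
=N^{\#(\gamma\delta\gamma^{-1}\vee\epsilon\,\pi\delta\pi\delta\,\epsilon^{-1})}.
\]
Conjugation preserves cycle count and $\epsilon^{-1}=\epsilon$, so
\[
\#(\gamma\delta\gamma^{-1}\vee\epsilon\,\pi\delta\pi\delta\,\epsilon)
=\#(\epsilon\gamma\delta\gamma^{-1}\epsilon\vee\pi\delta\pi\delta).
\]
Plugging this back and collecting the prefactors yields precisely the stated formula.

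\medskip\noindent\textbf{Main obstacle.} No single step is hard, but the bookkeeping around indices is the only place one can slip. The crucial bit of care is verifying the uniform formula $G^{(\epsilon_{k})}_{i_{k}i_{k+1}}=g_{j_{\epsilon(k)}j_{\epsilon(-k)}}$ for both signs of $\epsilon_{k}$, and then tracking how the substitution $\widetilde\imath=j\circ\epsilon$ conjugates the Wick constraint $\pi\delta\pi\delta$ into $\epsilon\,\pi\delta\pi\delta\,\epsilon^{-1}$; once that is in place the rest is an application of the cycle-counting identity and invariance of $\#(\cdot)$ under conjugation.
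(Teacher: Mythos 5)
Your proof is correct and follows essentially the same route as the paper: expand over $\epsilon$, express the trace constraint as $\ker(j)\geq\gamma\delta\gamma^{-1}$, substitute $j\circ\epsilon$ to bring in the plain Gaussian entries, apply the Wick formula, and count solutions via the join of the two partitions. The only cosmetic difference is that the paper conjugates the trace constraint by $\epsilon$ (getting $\ker(i)\geq\epsilon\gamma\delta\gamma^{-1}\epsilon$) before applying Wick, whereas you conjugate the Wick constraint (getting $\ker(j)\geq\epsilon\pi\delta\pi\delta\epsilon$) and invoke conjugation-invariance of $\#(\cdot\vee\cdot)$ at the end; these are equivalent.
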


\begin{proof}
Let $i = j \circ \epsilon$. Then
$G^{(\epsilon_k)}_{j_kj_{-k}} = g_{i_ki_{-k}}$. So
\begin{eqnarray*}\lefteqn{
\E(\tr(X^n))} \\ 
& = & N^{-(n/2 + 1)}2^{-n/2}
  \sum_{\epsilon_1, \dots, \epsilon_n = \pm 1}
  \E(\Tr(G^{(\epsilon_1)} \cdots G^{(\epsilon_n)})) 
\\ & = &
  N^{-(n/2 + 1)}2^{-n/2} \kern-1em \sum_{\epsilon_1, \dots,
    \epsilon_n = \pm 1} \mathop{\sum_{j_{\pm 1}, \dots,
      j_{\pm n} = 1}^N}_{\ker(j) \geq
  \gamma\delta\gamma^{-1}} \E(G^{(\epsilon_1)}_{j_1j_{-1}}
    \cdots G^{(\epsilon_n)}_{j_nj_{-n}}) \\ 
& = & N^{-(n/2 +
      1)}2^{-n/2} \kern-1em \sum_{\epsilon_1, \dots,
      \epsilon_n = \pm 1} \mathop{\sum_{i_{\pm 1}, \dots,
        i_{\pm n} = 1}^N}_{\ker(i) \geq
    \epsilon\gamma\delta\gamma^{-1}\epsilon}
      \E(g_{i_1i_{-1}} \cdots g_{i_ni_{-n}} ).
\end{eqnarray*}

Now $\E(g_{i_1i_{-1}} \cdots g_{i_ni_{-n}} ) = \#(\{\pi \in
\cP_2(n) \mid i_r = i_s$ and $i_{-r} = i_{-s}$ whenever $(r,
s) \in \pi\})$. Thus

\[
\mathop{\sum_{i_{\pm 1}, \dots, i_{\pm n} = 1}^N}_% 
{\ker(i)\geq \epsilon\gamma\delta\gamma^{-1}\epsilon}
  \E(g_{i_1i_{-1}} \cdots g_{i_ni_{-n}} ) = \sum_{\pi \in
    \cP_2(n)} N^{ \#( \epsilon \gamma \delta \gamma^{-1}
    \epsilon \vee \pi \delta \pi \delta)}.
\]
\end{proof}

\begin{remark}\label{remark:pair_decomposition}
We have to decide for a pair $(\pi, \epsilon)$ what the
value of $\#( \epsilon \gamma \delta \gamma^{-1} \epsilon
\vee \pi \delta \pi \delta) -(n/2 + 1)$ can be. Recall that
if $p$ and $q$ are pairings and $p \vee q$ denotes the join
as partitions then $2\#(p \vee q) = \#(pq)$ (see \cite[Lemma
  2]{mp}). Moreover we can write the cycle decomposition of
$pq$ as $pq = c_1c_1' \cdots c_k c_k'$ where $c_l' = q
c_l^{-1} q$.
\end{remark}

\begin{lemma}
For $\pi \in \cP_2(n)$ and $\epsilon \in\bZ_2^n$ we have
$\#( \epsilon \gamma \delta \gamma^{-1} \epsilon \vee \pi
\delta \pi \delta) \ab -(n/2 + 1) \leq 0$, with equality
only if $\#(\pi\gamma) = n/2 + 1$ and $\epsilon_r = -
\epsilon_s$ for all $(r, s) \in \pi$.

\end{lemma}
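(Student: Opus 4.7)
The key tool is the identity $2\#(p\vee q) = \#(pq)$ from Remark~\ref{remark:pair_decomposition}, applied to the pairings $p := \epsilon\gamma\delta\gamma^{-1}\epsilon$ and $q := \pi\delta\pi\delta$ of $[\pm n]$. So it suffices to show $\#(pq) \leq n+2$, with the characterized equality case.

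The first step of the plan is to absorb the sign permutation $\epsilon$ into the $\pi$-side. Using $\epsilon^2 = \mathrm{id}$ and the commutation $\epsilon\delta = \delta\epsilon$, one rewrites $pq = \epsilon\,\sigma\,q'\,\epsilon^{-1}$ where $\sigma := \gamma\delta\gamma^{-1}$ is a fixed pairing, $\pi' := \epsilon\pi\epsilon$, and $q' := \pi'\delta\pi'\delta$. Since conjugation preserves cycle counts, $\#(pq) = \#(\sigma q')$ and hence $\#(p\vee q) = \#(\sigma\vee q')$. A direct check shows that $q'$ is again a pairing of $[\pm n]$ whose pairs have a transparent description: for each $(r,s)\in\pi$, $q'$ contains either the \emph{straight} pairs $\{(r,s),(-r,-s)\}$ (when $\epsilon_r\epsilon_s = 1$) or the \emph{twisted} pairs $\{(r,-s),(s,-r)\}$ (when $\epsilon_r\epsilon_s = -1$).

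Next I would prove the intermediate bound $\#(\sigma\vee q') \leq \#(\pi\gamma)$ by viewing the blocks of $\sigma\vee q'$ as connected components of the $2$-regular graph on $[\pm n]$ whose edges are the $\sigma$- and $q'$-pairs. Encoding $k\in[\pm n]$ as $(|k|, \mathrm{sign}(k))$, each $\sigma$-step flips sign and shifts position by $\gamma^{\pm 1}$ (according to the current sign), while each $q'$-step applies $\pi$ to position and flips sign iff the corresponding $\pi$-pair is twisted. In the all-twisted case the computation $|\sigma q'(r)| = \gamma\pi(r)$ and the preservation of $[n]$ by $\sigma q'$ give $\#(\sigma q') = 2\#(\gamma\pi) = 2\#(\pi\gamma)$, hence $\#(\sigma\vee q') = \#(\pi\gamma)$. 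A straight pair, by contrast, causes the positive and negative halves of some $\pi\gamma$-cycle to merge into a single block of $\sigma\vee q'$, strictly decreasing the block count; the combinatorial accounting of these merges yields $\#(\sigma\vee q') \leq \#(\pi\gamma)$.

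Combining with the classical Euler-type bound $\#(\pi) + \#(\pi\gamma) \leq n + 1$ (which for a pairing gives $\#(\pi\gamma) \leq n/2 + 1$, with equality iff $\pi$ is non-crossing) yields $\#(p\vee q) \leq n/2 + 1$. For equality both inequalities must be equalities: $\#(\pi\gamma) = n/2+1$ forces $\pi$ non-crossing, and $\#(\sigma\vee q') = \#(\pi\gamma)$ forces every $\pi$-pair to be twisted, i.e.\ $\epsilon_r = -\epsilon_s$ for every $(r,s)\in\pi$. The main obstacle is the careful path-tracking in the $2$-regular graph needed for the merge/split analysis underlying the intermediate bound.
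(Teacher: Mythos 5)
Your opening moves match the paper's --- you use $2\#(p\vee q) = \#(pq)$ from Remark~\ref{remark:pair_decomposition} and conjugate by $\epsilon$ to reduce to $\#(\sigma q')$ with $\sigma = \gamma\delta\gamma^{-1}$ and $q' = \pi'\delta\pi'\delta$ --- but the plan then breaks, because the intermediate bound $\#(\sigma\vee q') \leq \#(\pi\gamma)$ is false. Take $n=4$, $\pi=(1,3)(2,4)$, and $\epsilon=(1,1,1,1)$, so both $\pi$-pairs are \emph{straight}. Then $\pi\gamma = (1,4,3,2)$ is a single $4$-cycle, so $\#(\pi\gamma) = 1$, while a direct computation gives $\sigma q' = (1,-2)(2,-3)(3,-4)(4,-1)$, i.e.\ $\#(\sigma q')=4$ and $\#(\sigma\vee q') = 2 > 1 = \#(\pi\gamma)$. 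The ``merges only decrease the count'' intuition behind your bound is also wrong: with the same $\pi$ and $\epsilon = (1,1,1,-1)$ (one straight, one twisted pair) one gets $\sigma q' = (1,-2,-3,2)(3,-4,-1,4)$ and $\#(\sigma\vee q') = 1$, so flipping the remaining twisted pair to straight \emph{raises} the block count from $1$ to $2$. The same example with $\epsilon = (1,1,1,-1)$ shows your equality claim (``$\#(\sigma\vee q') = \#(\pi\gamma)$ forces every pair twisted'') is false as well, since there $\#(\sigma\vee q') = \#(\pi\gamma) = 1$ with a straight pair present.

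The paper sidesteps all of this by splitting into two cases and bounding $\#(p\vee q)$ against $n/2+1$ \emph{directly}, never against $\#(\pi\gamma)$. If every $\pi$-pair is twisted, $\epsilon\pi\delta\pi\epsilon = \pi\delta\pi\delta$, the cycle count collapses to $2\#(\gamma\pi)$, and Euler's formula in $S_n$ gives $\#(p\vee q)-(n/2+1) = -2g \leq 0$, with $g=0$ iff $\pi$ is non-crossing. If some pair is straight, $\epsilon\pi\delta\pi\epsilon$ has a through string connecting $[n]$ to $[-n]$, so $\langle\gamma\delta\gamma^{-1}\delta,\, \epsilon\pi\delta\pi\epsilon\rangle$ acts transitively on $[\pm n]$ and Euler's formula there gives $\#(p\vee q)-(n/2+1) = -1-g' \leq -1$. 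Your counterexample lives in the paper's second case (with $g'=0$), precisely the regime where the attempted comparison to $\#(\pi\gamma)$ fails.
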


\begin{proof}
$2\#( \epsilon \gamma \delta \gamma^{-1} \epsilon \vee \pi
  \delta \pi \delta) = \#(\gamma \delta \gamma^{-1} \delta
  \epsilon \pi \delta \pi \epsilon)$. Now $\gamma \delta
  \gamma^{-1} \delta$ has 2 cycles and $\epsilon \pi \delta
  \pi \epsilon$ is a pairing. Thus $\#(\gamma \delta
  \gamma^{-1} \delta) = 2$ and $\#(\epsilon \pi \delta \pi
  \epsilon) = n$.

Now we consider two cases. In the first case for all $(r,
s) \in \pi$ we have $\epsilon_r = -\epsilon_s$. Then
$\epsilon \pi \delta \pi \epsilon = \pi \delta \pi
\delta$. In this case
\[
\#(\gamma \delta \gamma^{-1} \delta \epsilon \pi \delta \pi \epsilon)
=
\#(\gamma \delta \gamma^{-1} \delta \pi \delta \pi \delta)
=
\#(\gamma\pi \delta \gamma^{-1}\pi \delta)
 = 2 \#(\gamma \pi)
\]
Note that we have used the fact that $\delta \gamma^{-1}
\delta$ and $\pi$ act non-trivially only on disjoint sets
and thus commute. Thus for $\sigma \in S_n$ we have
$\#(\sigma \delta \sigma^{-1} \delta) = 2 \#(\sigma)$. These
two facts will be used a number of times below.  So by
Eq. (\ref{eq:euler_characteristic}) we have for some $g \geq
0$
\[
\#(\pi) + \#(\gamma\pi) + \#(\gamma) = n + 2(1 - g).
\]
So
\[
\#(\gamma\pi) = n/2 + 1 - 2g.
\]
Thus
\begin{eqnarray*}\lefteqn{
\#( \epsilon \gamma \delta \gamma^{-1} \epsilon \vee \pi
\delta \pi \delta) \ab -(n/2 + 1) }\\
&& = \#(\gamma\pi) - (n/2 +
1) = -2 g \leq 0.
\end{eqnarray*}

In the second case there is some $(r, s) \in \pi$ such that
$\epsilon_r = \epsilon_s$. In this case $\langle \gamma
\delta\gamma^{-1} \delta, \epsilon \pi \delta \pi \epsilon
\rangle$ acts transitively on $[\pm n]$. So again by
Eq. (\ref{eq:euler_characteristic}) we have for some $g' \geq 0$
\[
\#( \gamma \delta \gamma^{-1} \delta \epsilon \pi \delta \pi
\epsilon ) + \#(\epsilon \pi \delta \pi \epsilon ) +
\#(\gamma \delta \gamma^{-1} \delta) = 2n + 2(1 - g').
\]
Thus
\[
\#( \epsilon \gamma \delta \gamma^{-1} \epsilon \vee \pi
\delta \pi \delta) \ab -(n/2 + 1) = -1 -  g' \leq -1.
\]
\end{proof}

\begin{remark}\label{remark:highest_order}
We have identified the leading term as all the pairs $(\pi,
\epsilon)$ where $\#(\gamma\pi) = n/2+1$ and $\epsilon_r = -
\epsilon_s$ for all $(r, s) \in \pi$. The first condition is
that $\pi \in NC_2(n)$. Since there are for a given $\pi$,
$2^{n/2}$ ways of choosing $\epsilon$ so that the second
condition is satisfied we get, as expected, that the leading term of
$\E(\tr(X^n))$ is the Catalan number $|NC_2(n)| = C_{n/2} =
\frac{1}{n/2 + 1} \binom{n}{n/2}$, i.e.
\[
m_n = \lim_{n \rightarrow \infty} \E(\tr(X^n)) = C_{n/2}.
\]
Thus $N( \E(\tr(X^n)) - C_{n/2})$ starts with the
coefficient of $N^{-1}$ in the expansion
(\ref{equation:1}). Hence $m'_n = \lim_N N( \E(\tr(X^n)) -
C_{n/2})$ is the coefficient of $N^{-1}$. Suppose that $\pi
\in \cP_2(n)$ and $\epsilon_r = -\epsilon_s$ for all $(r, s)
\in \pi$. Then as noted above we have $\epsilon\pi \delta
\pi \epsilon = \pi \delta \pi \delta$ so for some $g \geq 0$
\[
\#(\epsilon \gamma \delta \gamma^{-1} \epsilon \vee \pi\delta\pi\delta) - (n/2 + 1) = -2g. 
\]
Thus these pairs cannot contribute to the coefficient of $N^{-1}$.

\begin{corollary}\label{cor:sub_leading_order}
The only pairs $(\pi, \epsilon)$ that can contribute to the
coefficient of $N^{-1}$ in (\ref{equation:1}) are those for
which there is at least one pair $(r, s) \in \pi$ such that
$\epsilon_r = \epsilon_s$ and $\#(\gamma \delta \gamma^{-1}
\delta\epsilon \pi \delta \pi \epsilon ) = n$.
\end{corollary}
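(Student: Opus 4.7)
The plan is to read off the coefficient of $N^{-1}$ in the sum (\ref{equation:1}) directly from the dichotomy established in the previous Lemma. Every pair $(\pi, \epsilon)$ falls into exactly one of two cases: either $\epsilon_r = -\epsilon_s$ for every $(r,s) \in \pi$ (Case~1), or there exists at least one pair $(r,s) \in \pi$ with $\epsilon_r = \epsilon_s$ (Case~2). My strategy is to show that Case~1 contributes only to even-indexed powers of $N^{-1}$, so that the entire coefficient of $N^{-1}$ is supplied by Case~2, and then to pick out the subcondition that makes the corresponding exponent exactly $-1$.

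In Case~1 the Lemma already identifies the exponent $\#(\epsilon\gamma\delta\gamma^{-1}\epsilon \vee \pi\delta\pi\delta) - (n/2+1)$ with $-2g$, where $g \geq 0$ is the genus produced by the Euler relation (\ref{eq:euler_characteristic}) for $\langle \pi, \gamma\rangle$. Since $-2g$ is always even, no Case~1 pair can contribute to the $N^{-1}$ coefficient. In Case~2 the transitivity of $\langle \gamma\delta\gamma^{-1}\delta, \epsilon\pi\delta\pi\epsilon\rangle$ on $[\pm n]$ allows the Lemma to express the exponent as $-1 - g'$ with $g' \geq 0$. Thus the contribution to the coefficient of $N^{-1}$ arises precisely from the Case~2 pairs for which $g' = 0$.

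The final step is to translate the condition $g' = 0$ back into the cycle-count statement in the corollary. Combining the Euler relation with $\#(\epsilon\pi\delta\pi\epsilon) = n$ (as $\epsilon\pi\delta\pi\epsilon$ is a pairing of $[\pm n]$) and $\#(\gamma\delta\gamma^{-1}\delta) = 2$ yields $\#(\gamma\delta\gamma^{-1}\delta\,\epsilon\pi\delta\pi\epsilon) = n - 2g'$, so that $g' = 0$ is equivalent to $\#(\gamma\delta\gamma^{-1}\delta\,\epsilon\pi\delta\pi\epsilon) = n$, which is exactly the condition stated. There is essentially no obstacle here: the corollary is a bookkeeping consequence of the Lemma that isolates the genus-zero stratum of Case~2 and thereby sets up the combinatorial problem of the next section, namely to enumerate those pairs $(\pi, \epsilon)$ meeting both conditions.
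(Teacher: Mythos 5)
Your proposal is correct and follows essentially the same route as the paper: the Case~1/Case~2 dichotomy from the preceding Lemma (and Remark \ref{remark:highest_order}) already rules out the $\epsilon_r = -\epsilon_s$ pairs because their exponent $-2g$ is even, and the Case~2 exponent $-1-g'$ together with the Euler relation gives $\#(\gamma\delta\gamma^{-1}\delta\,\epsilon\pi\delta\pi\epsilon) = n - 2g'$, so $g'=0$ is precisely the stated cycle-count condition. Your account is just a slightly more explicit unpacking of the paper's one-line proof.
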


\begin{proof}
We saw that to contribute to the $N^{-1}$ term we must have
at least one pair $(r, s) \in \pi$ such that $\epsilon_r =
\epsilon_s$ and
\[
\#( \epsilon \gamma \delta \gamma^{-1} \epsilon \vee \pi
\delta \pi \delta) \ab -(n/2 + 1) = -1.
\]
This implies $\#(\gamma \delta \gamma^{-1}
\delta\epsilon \pi \delta \pi \epsilon ) = n$.
\end{proof}

\begin{remark}
As we have observed in the calculations above, for a given
pair $(\pi, \epsilon)$ all that matters for the permutation
$\epsilon\pi \delta \pi \epsilon$, and thus the right hand
side of Eq. (\ref{equation:1}), is whether for each pair
$(r, s) \in \pi$ we have (\textit{a}) $\epsilon_r = -
\epsilon_s$ or (\textit{b}) $\epsilon_r = \epsilon_s$. For a
given $\pi$ and a choice of (\textit{a}) or (\textit{b}) for
each pair, there are $2^{n/2}$ choices of $\epsilon$.
\end{remark}

In the next section we shall show that $m'_n$ counts a
certain number of planar diagrams. The first five
non-zero infinitesimal moments are:
\begin{center}\begin{tabular}{c|ccccc}
$n$     & 2 & 4 &  6 &  8 & 10\\ \hline
$m'_n$  & 1 & 5 & 22 & 93 & 386  \\
\end{tabular}. \end{center}

\section{Infinitesimal moments and non-crossing partitions}
\label{section:inf_moments}
In this section we present the non-crossing partitions that
describe the infinitesimal moments of the \goe{}.  For
example $m'_4 = 5$ and the five diagrams are in Figure
\ref{fig:fourth_inf_moment}.

\setbox1=\hbox{\includegraphics{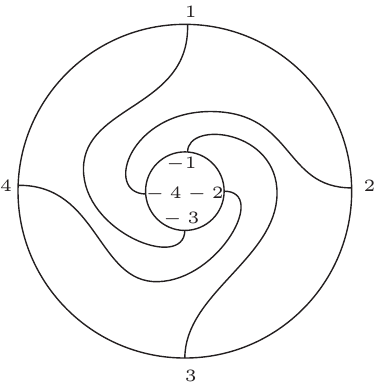}}
\setbox2=\hbox{\includegraphics{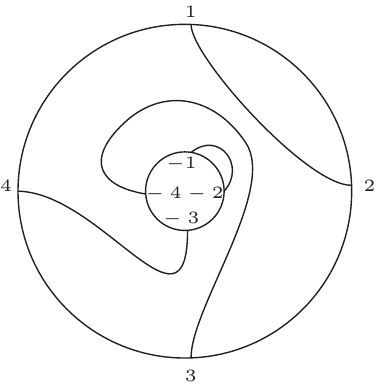}}
\setbox3=\hbox{\includegraphics{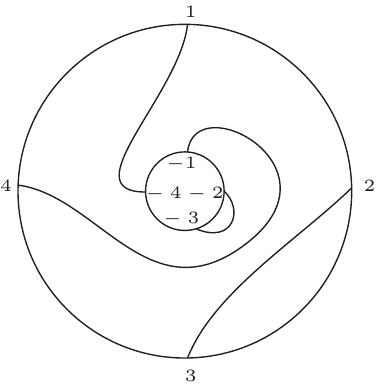}}
\setbox4=\hbox{\includegraphics{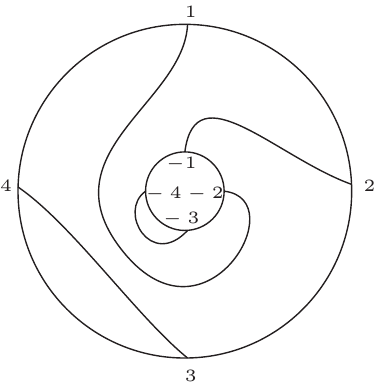}}
\setbox5=\hbox{\includegraphics{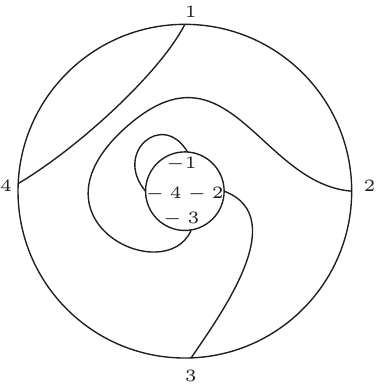}}

\begin{figure}
\leavevmode
\box1\hfill\box2\hfill\box3

\leavevmode
\hfill\box4\hfill\box5\hfill\hbox{}
\caption{\small\label{fig:fourth_inf_moment} The 5
  non-crossing annular pairings corresponding the
  infinitesimal moment $m'_4 = 5$. Note that if $(r,s)$ is a
  pair then so is $(-r, -s)$.}
\end{figure}

By Corollary \ref{cor:sub_leading_order} we must find all
pairs $(\pi, \epsilon)$ with $\pi \in \cP_2(n)$ and
$\epsilon \in \bZ_2^n$ such that there is at least one pair
$(r, s) \in \pi$ such that $\epsilon_r = \epsilon_s$ and
$\#(\gamma \delta \gamma^{-1} \delta \rho) = n$ where $\rho
= \epsilon \pi \delta \pi \epsilon$. These are exactly the
non-crossing annular pairings of \cite[Thm. 6.1]{mn} where
we have reversed the orientation of the inner circle (see
Figure \ref{fig:fourth_inf_moment}). Moreover we do not get
all non-crossing annular pairings, only those for which
\begin{enumerate}

\item
$\rho$ commutes with $\delta$,

\item
For all $r \in [n]$, $(r, -r)$ is never a pair of $\rho$.

\item
the blocks of $\rho$ come in pairs:  if $(r, s) \in \pi$ then
$(-r, -s) \in\rho$. 
\end{enumerate}
If $r$ and $s$ have opposite signs then $(r,s)$ is a
\textit{through string}, i.e. it connects the two
circles. Thus these pairings always connect the two circles.
\end{remark}

\begin{notation}
We denote by $NC_2^\delta(n,-n)$ the set of all non-crossing
annular pairings $\rho$ that satisfy (\textit{i}),
(\textit{ii}), (\textit{iii}) above. By convention
$NC_2^\delta(n,-n)$ is empty for $n$ odd.
\end{notation} 
We have put a `$-$' sign in front of the second `$n$' to
remind us the orientation of the inside circle has been
reversed from that used in \cite{mn}.  Summarizing the discussion above we have the
following theorem.

\begin{theorem}\label{thm:inf_moments_diagrams}
Let $X_N$ be the $N \times N$ \goe{} and $m_n$ the $n^{th}$
moment of the semi-circle law. Then the infinitesimal
moments of the \goe{} are given by $m'_n = \lim_N
N(\E(\tr(X_N^n)) - m_n) = |NC_2^\delta(n,-n)|$ for $n$ even
and $m' = 0$ for $n$ odd.
\end{theorem}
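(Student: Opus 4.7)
The plan is to leverage work already done: the discussion preceding the theorem has identified the pairs $(\pi,\epsilon)$ that contribute to the coefficient of $N^{-1}$ in (\ref{equation:1}) as exactly those whose associated permutation $\rho(\pi,\epsilon):=\epsilon\pi\delta\pi\epsilon$ belongs to $NC_2^\delta(n,-n)$. The theorem will then reduce to a fiber count for the map $(\pi,\epsilon)\mapsto\rho(\pi,\epsilon)$.

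First, for $n$ odd, $\cP_2(n)$ is empty, so Lemma \ref{lemma:1} gives $\E(\tr(X_N^n))=0$ identically in $N$ and $m'_n=0$. For $n$ even, combining Lemma \ref{lemma:1} with Corollary \ref{cor:sub_leading_order} yields
\[
m'_n \;=\; 2^{-n/2}\cdot\#\bigl\{(\pi,\epsilon)\in\cP_2(n)\times\bZ_2^n\,:\, \rho(\pi,\epsilon)\in NC_2^\delta(n,-n)\bigr\}.
\]
That the image does lie in $NC_2^\delta(n,-n)$ follows from three short checks. Property (i) holds because $\epsilon\delta=\delta\epsilon$ and $\pi, \delta\pi\delta$ have disjoint support (so commute), giving $\delta\rho\delta=\rho$; property (iii) is then an immediate consequence of (i); property (ii) holds because a direct case analysis shows $\rho(r)=\pm s$ where $s=\pi(r)$, so $\rho(r)\notin\{r,-r\}$. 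The annular non-crossing condition $\#(\gamma\delta\gamma^{-1}\delta\rho)=n$ and the existence of a through-string are supplied by Corollary \ref{cor:sub_leading_order}.

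The essential step is to verify that $(\pi,\epsilon)\mapsto\rho(\pi,\epsilon)$ is $2^{n/2}$-to-$1$ onto $NC_2^\delta(n,-n)$. For a fixed $\rho$, one recovers $\pi$ by projecting each $\rho$-pair to the pair of absolute values of its endpoints. A four-case check, expanding $\rho(r)=\epsilon\pi\delta\pi\epsilon(r)$ over the values of $(\epsilon_r,\epsilon_s)$, shows that $\rho(r)$ depends on the signs only through the product $\epsilon_r\epsilon_s$: through-pairs of $\rho$ correspond to $\epsilon_r\epsilon_s=+1$, non-through pairs to $\epsilon_r\epsilon_s=-1$. The $n/2$ products $\{\epsilon_r\epsilon_s\}_{(r,s)\in\pi}$ are therefore uniquely determined by $\rho$, while the $n/2$ independent simultaneous flips $(\epsilon_r,\epsilon_s)\mapsto(-\epsilon_r,-\epsilon_s)$ on each pair of $\pi$ each preserve $\rho$ and generate a free $(\bZ/2)^{n/2}$-action on each fiber. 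This gives exactly $2^{n/2}$ preimages, and hence
\[
m'_n \;=\; 2^{-n/2}\cdot 2^{n/2}\cdot|NC_2^\delta(n,-n)| \;=\; |NC_2^\delta(n,-n)|.
\]

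The main obstacle is the sign bookkeeping in the four-case computation of $\rho(r)$: showing both that $\rho(r)$ depends on $(\epsilon_r,\epsilon_s)$ only through $\epsilon_r\epsilon_s$, and matching the resulting sign product to the through versus non-through dichotomy on $\rho$-pairs. Once this is carried out the counting is automatic.
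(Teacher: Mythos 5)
Your proposal is correct and follows essentially the same route as the paper: identify the $N^{-1}$ pairs via Corollary \ref{cor:sub_leading_order}, check that $\rho=\epsilon\pi\delta\pi\epsilon$ satisfies (\textit{i})--(\textit{iii}), note that $\rho$ depends on $\epsilon$ only through the products $\epsilon_r\epsilon_s$ over pairs of $\pi$ (through strings $\leftrightarrow\epsilon_r\epsilon_s=1$, non-through $\leftrightarrow\epsilon_r\epsilon_s=-1$), and cancel the $2^{n/2}$ fiber against the $2^{-n/2}$ prefactor. The paper presents this discussion informally in the remark preceding the theorem (and makes the surjectivity of the map explicit only later in \S\ref{section:universal}), whereas you spell out the same bijection and fiber count as a self-contained argument; the content is the same.
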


\setbox4=\hbox{%
\begin{tikzpicture}[anchor=base, baseline]
\draw [ultra thick] (0.0 , 0.5) -- (0.0, -0.2) -- (2.25 , -0.2) -- (2.25, 0.5);
\node[above] at (0.0,.45) {$+$};
\node[above] at (0.0,.75) {1};
\node[above] at (2.25,.45) {$+$};
\node[above] at (2.25,.75) {7};
\draw [thick] (0.375 , 0.5) -- (0.375, 0.2) -- (0.75 , 0.2) -- (0.75, 0.5);
\node[above] at (0.375,.45) {$+$};
\node[above] at (0.375,.75) {2};
\node[above] at (0.75,.45) {$-$};
\node[above] at (0.75,.75) {3};
\draw [ultra thick] (1.125 , 0.5) -- (1.125, 0.0) -- (2.625, 0.0) -- (2.625, 0.5);
\node[above] at (1.125,.45) {$+$};
\node[above] at (1.125,.75) {4};
\node[above] at (2.625,.45) {$+$};
\node[above] at (2.625,.75) {8};
\draw [thick] (1.5 , 0.5) -- (1.5, 0.2) -- (1.875, 0.2) -- (1.875, 0.5);
\node[above] at (1.5,.45) {$+$};
\node[above] at (1.5,.75) {5};
\node[above] at (1.875,.45) {$-$};
\node[above] at (1.875,.75) {6};
\end{tikzpicture}}

%\setbox4=\hbox{%
%\begin{tikzpicture}[anchor=base, baseline]
%\draw [ultra thick] (0.0 , 0.5) -- (0.0, -0.2) -- (2.25 , -0.2) -- (2.25, 0.5);
%\node[above] at (0.0,.45) {$+$};
%\node[above] at (2.25,.45) {$+$};
%\draw [thick] (0.375 , 0.5) -- (0.375, 0.2) -- (0.75 , 0.2) -- (0.75, 0.5);
%\node[above] at (0.375,.45) {$+$};
%\node[above] at (0.75,.45) {$-$};
%\draw [ultra thick] (1.125 , 0.5) -- (1.125, 0.0) -- (2.625, 0.0) -- (2.625, 0.5);
%\node[above] at (1.125,.45) {$+$};
%\node[above] at (2.625,.45) {$+$};
%\draw [thick] (1.5 , 0.5) -- (1.5, 0.2) -- (1.875, 0.2) -- (1.875, 0.5);
%\node[above] at (1.5,.45) {$+$};
%\node[above] at (1.875,.45) {$-$};
%\end{tikzpicture}}

\setbox6=\hbox{%
\begin{tikzpicture}[anchor=base, baseline]
\draw [ultra thick] (0.0 , 0.5) -- (0.0, -0.2) -- (0.75 , -0.2) -- (0.75, -0.9);
\node[above] at (0.0,.45) {$+$};
\node[above] at (0.0,.75) {1};
\node[above] at (0.75,-1.4) {$+$};
\node[above] at (0.75,-1.8) {7};
\draw [thick] (0.375 , 0.5) -- (0.375, 0.2) -- (0.75 , 0.2) -- (0.75, 0.5);
\node[above] at (0.375,.45) {$+$};
\node[above] at (0.375,.75) {2};
\node[above] at (0.75,.45) {$-$};
\node[above] at (0.75,.75) {3};
\draw [ultra thick] (1.125 , 0.5) -- (1.125, -0.9);
\node[above] at (1.125,.45) {$+$};
\node[above] at (1.125,.75) {4};
\node[above] at (1.125,-1.4) {$+$};
\node[above] at (1.125,-1.8) {8};
\draw [thick] (0.0 , -0.9) -- (0.0, -0.6) -- (0.375, -0.6) -- (0.375, -0.9);
\node[above] at (0.0,-1.4) {$+$};
\node[above] at (0.0,-1.8) {5};
\node[above] at (0.375,-1.4) {$-$};
\node[above] at (0.375,-1.8) {6};
\end{tikzpicture}}

\setbox5=\hbox{\includegraphics[scale=0.7]{example_2}}

\begin{figure}[t]
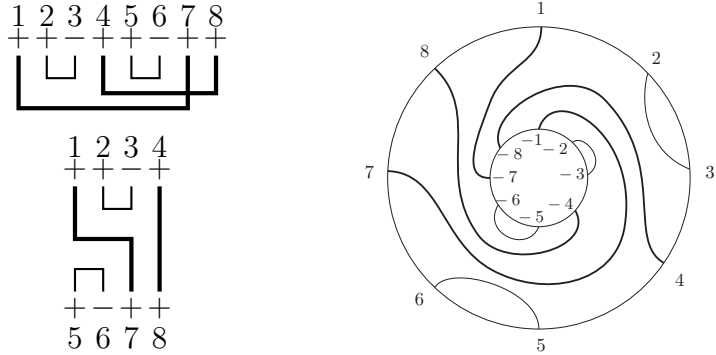

\begin{center}
\hfill$\vcenter{\hsize\wd4\box4 \medskip\hfill\box6\hfill\hbox{}}$ \hfill $\vcenter{\hsize\wd5\box5}$\hfill\hbox{}
\caption{\small\label{fig:1} We illustrate here an element
  of $NC_2^\delta(8, -8)$. Let $\pi =
  (1,7)\ab(2,3)\ab(4,8)(5,6)$, and $\epsilon = (1,
  1,$$-1,1,\ab 1, -1,\ab 1, 1)$. Then $\rho = \epsilon \pi
  \delta \pi \epsilon = (1, -7)\ab(-1, 7)(2, 3)(-2, -3) (4,
  -8)(-4, 8)\ab (5,6)\ab (-5,-6)$. Note that the symmetry
  condition $\delta \rho = \rho \delta$ means that once the
  non-through strings are placed (in this example $(2,
  4)(5,6)(-2, -4)(-5, -6)$) the through strings are forced;
  i.e. we must pair $4$ with $-8$ etc.  Note that by reversing the order of $\{5,6,7,8\}$ we can make the diagram non-crossing, see Remark \ref{rem:twisted_pairings}. {\vrule width 0pt
    depth 2em}}
\end{center}
\end{figure}

\setbox1=\hbox{%
\begin{tikzpicture}[anchor=base, baseline]
\draw [ultra thick] (0.0 , 0.5) -- (0.0, 0.0) -- (0.5 , 0.0) -- (0.5, 0.5);
\node[above] at (0.0,.75) {1};
\node[above] at (0.0,.45) {$+$};
\node[above] at (0.5,.75) {2};
\node[above] at (0.5,.45) {$+$};
\end{tikzpicture}}

\setbox2=\hbox{%
\begin{tikzpicture}[anchor=base, baseline]
\draw [ultra thick] (0.0 , 0.5) -- (0.0, 0.2) -- (0.75 , 0.2) -- (0.75, 0.5);
\node[above] at (0.0,.75) {1};
\node[above] at (0.0,.45) {$+$};
\node[above] at (0.75,.75) {3};
\node[above] at (0.75,.45) {$+$};
\draw [ultra thick] (0.375 , 0.5) -- (0.375, 0.0) -- (1.125 , 0.0) -- (1.125, 0.5);
\node[above] at (0.375,.75) {2};
\node[above] at (0.375,.45) {$+$};
\node[above] at (1.125,.75) {4};
\node[above] at (1.125,.45) {$+$};
\end{tikzpicture}}

\setbox3=\hbox{%
\begin{tikzpicture}[anchor=base, baseline]
\draw [ultra thick] (0.0 , 0.5) -- (0.0, 0.2) -- (1.125 , 0.2) -- (1.125, 0.5);
\node[above] at (0.0,.75) {1};
\node[above] at (0.0,.45) {$+$};
\node[above] at (1.125,.75) {4};
\node[above] at (1.125,.45) {$+$};
\draw [ultra thick] (0.375 , 0.5) -- (0.375, 0.0) -- (1.5 , 0.0) -- (1.5, 0.5);
\node[above] at (0.375,.75) {2};
\node[above] at (0.375,.45) {$+$};
\node[above] at (1.5,.75) {5};
\node[above] at (1.5,.45) {$+$};
\draw [ultra thick] (0.75 , 0.5) -- (0.75, -0.2) -- (1.875, -0.2) -- (1.875, 0.5);
\node[above] at (0.75,.75) {3};
\node[above] at (0.75,.45) {$+$};
\node[above] at (1.875,.75) {6};
\node[above] at (1.875,.45) {$+$};
\end{tikzpicture}}

Here are three examples of $(\pi, \epsilon)$'s with all
through strings.

\leavevmode\hfill
$\vcenter{\hsize=\wd1\box1}$
or
$\vcenter{\hsize=\wd2\box2}$
or
\raise -1.4em\box3
\hfill\hbox{}

\noindent
Here is an example with 6 through strings and 16 non-through
strings.

\leavevmode\hfill
\begin{tikzpicture}[anchor=base, baseline]
\draw [thick](0.375, 0.5) -- (0.375, 0.2) -- (0.75, 0.2) -- (0.75, 0.5);
\node[above] at (0.375,.45) {$+$};
\node[above] at (0.75,.45) {$-$};
\draw [ultra thick] (1.125 , 0.5) -- (1.125, -0.2) -- (4.5 , -0.2) -- (4.5, 0.5);
\node[above] at (1.125,.45) {$+$};
\node[above] at (4.5,.45) {$+$};
\draw [thick](1.5, 0.5) -- (1.5, 0.0) -- (2.625, 0.0) -- (2.625, 0.5);
\node[above] at (1.5,.45) {$+$};
\node[above] at (2.625,.45) {$-$};
\draw [thick](1.875, 0.5) -- (1.875, 0.2) -- (2.25, 0.2) -- (2.25, 0.5);
\node[above] at (1.875,.45) {$+$};
\node[above] at (2.25,.45) {$-$};
\draw [ultra thick] (3 , 0.5) -- (3, -0.4) -- (6.375 , -0.4) -- (6.375, 0.5);
\node[above] at (3,.45) {$+$};
\node[above] at (6.375,.45) {$+$};
\draw [thick] (3.375, 0.5) -- (3.375, 0.2) -- (3.75, 0.2) -- (3.75, 0.5);
\node[above] at (3.37,.45) {$+$};
\node[above] at (3.75,.45) {$-$};
\draw [ultra thick] (4.125 , 0.5) -- (4.125, -0.6) -- (6.75 , -0.6) -- (6.75, 0.5);
\node[above] at (4.125,.45) {$+$};
\node[above] at (6.75,.45) {$+$};
\draw [thick] (4.875, 0.5) -- (4.875, 0.2) -- (5.25, 0.2) -- (5.25, 0.5);
\node[above] at (4.875,.45) {$+$};
\node[above] at (5.25,.45) {$-$};
\draw [thick](5.625, 0.5) -- (5.625, 0.2) -- (6, 0.2) -- (6, 0.5);
\node[above] at (5.625,.45) {$+$};
\node[above] at (6,.45) {$-$};
\draw [thick](7.125, 0.5) -- (7.125, 0.0) -- (8.375, 0.0) -- (8.375, 0.5);
\node[above] at (7.125,.45) {$+$};
\node[above] at (8.375,.45) {$-$};
\draw [thick](7.5, 0.5) -- (7.5, 0.2) -- (8, 0.2) -- (8, 0.5);
\node[above] at (7.5,.45) {$+$};
\node[above] at (8.0,.45) {$-$};
\end{tikzpicture}
\hfill\hbox{}

\medskip
\noindent
$\pi = \{(1,2)(3, 12)(4,7)(5,6)(8, 17)(9, 10)(11, 18)(13, 14)(15, 16)\ab(19, 22)\ab(20, 21)\}$.

\begin{remark}\label{rem:alternative_description}
Scrutiny of these examples reveals an important
alternative way of describing elements of $NC_2^\delta(n,
-n)$ that will be useful in computing the infinitesimal
cumulants $\{\kappa'_n\}_n$ of $(\mu, \mu')$. The important
property is that if we fuse the thick lines, formed by the
through strings, we always get a non-crossing
partition. Moreover the thick lines always occur in the same
way: if the block formed by the thick lines is $(i_1, \dots,
i_k)$ then $k =2 l$ must be even and the pairs are $(i_1,
i_{l+1})$, $(i_2, i_{l+2})$, \dots, $(i_l, i_{2l})$. Thus
given a non-crossing partition $\pi \in NC(n)$ and a block
$V \in \pi$ such that $|V|$ is even and all other blocks of
$\pi$ have 2 elements we can construct an element of
$NC_2^\delta(n, -n)$.

\end{remark}

\begin{definition}[{c.f.\cite[Def.\,1]{amsw} and \cite[\S6]{kms}}]
Let $\pi$ be a non-crossing partition in which no block has
more than two elements. From $\pi$ create a new partition
$\tilde\pi$ by joining into a single block all the blocks of
$\pi$ of size 1, call this block $V$. If $\tilde\pi$ is
non-crossing and $|V|$ is even we say that $(\tilde\pi, V)$
is a non-crossing \textit{half-pairing}. The blocks of $\pi$
of size 1 are called the \textit{through strings}. (See
Figure \ref{fig:2}.) We let $NCC_2(n) = \{ (\pi, V) \mid \pi
\in NC(n), V \in \pi$, $|V|$ is even, and all other blocks
of $\pi$ have 2 elements$\}$. By convention $NCC_2(n)$ is
empty for $n$ odd.

\end{definition}

\begin{figure}
\hbox{}\hfill
\includegraphics{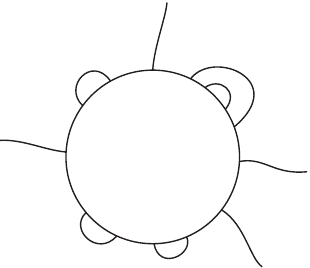} \hfill
\includegraphics{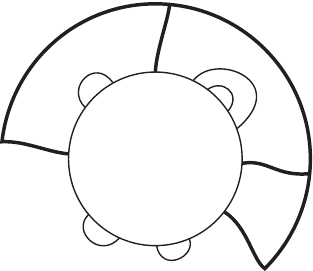}
\hfill\hbox{}
\caption{
{\small  \label{fig:2}
On the left is $\pi$ and on the right is $\tilde \pi$. }} 
\end{figure}

\begin{remark}\label{remark:bijection}
From Remark \ref{rem:alternative_description} we see that
there is a bijection from $NCC_2(n)$ to $NC_2^\delta(n, -n)$
where a pair $(\pi, V)$ with $V$ a block of size $k$
produces a $\rho \in NC_2^\delta(n, -n)$ with $k$ through
strings. By \cite[Lemma 13]{amsw} the number of non-crossing
half-pairings with $k$ through strings is
$\binom{n}{(n-k)/2}$
\end{remark}

\begin{remark}\label{rem:twisted_pairings}
In Figure \ref{fig:1} we presented an example where we have $(\pi, \epsilon)$, a pairing of $[8]$ with crossings and an assignment of signs, we unfolded the diagram into $\rho$,  a non-crossing annular pairing and also a non-crossing pairing on a disc. This is in fact a general situation. We rotate the numbers $\{ 1, 2, 3, \dots, n\}$ until half the thick lines are in $\{ 1, 2, 3, \dots, n/2\}$. Then reverse the numbers $\{n/2 + 1, \dots, n\}$.
\end{remark}

\begin{lemma}\label{lemma:inf_moments}
Let $n = 2m$. The number of non-crossing annular pairings
satisfying (\textit{i}), (\textit{ii}), and (\textit{iii})
above is
\[
m'_n = \sum_{k=1}^{m} \binom{n}{m-k} = \frac{1}{2} \Big(
2^n - \binom{n}{m}\Big).
\]
\end{lemma}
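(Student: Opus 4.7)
My plan is to deduce the formula directly from the bijection established in Remark \ref{remark:bijection} together with the counting formula invoked from \cite{amsw}. No new structural argument is required — the work is to set up the sum correctly and then apply a symmetry of Pascal's triangle.

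First, I would invoke the bijection $NC_2^\delta(n,-n) \leftrightarrow NCC_2(n)$ from Remark \ref{remark:bijection}. Under this bijection an element $(\pi,V) \in NCC_2(n)$ with distinguished block $V$ of size $k$ corresponds to an annular pairing $\rho$ whose $k$ through strings are precisely the elements of $V$. By (\textit{ii}) and the pairing-of-blocks description in Remark \ref{rem:alternative_description}, $k$ must be even and strictly positive (at least one pair of through strings is required, since by construction the pairings in $NC_2^\delta(n,-n)$ must connect the two circles). Thus I stratify by the number of through strings: $k = 2l$, with $l \in \{1, 2, \dots, m\}$.

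Next I would apply \cite[Lemma 13]{amsw}, as cited in Remark \ref{remark:bijection}, which states that the number of non-crossing half-pairings of $[n]$ with exactly $k$ through strings equals $\binom{n}{(n-k)/2}$. Substituting $k = 2l$ and $n = 2m$, this becomes $\binom{2m}{m-l}$, giving
\[
m'_n = |NC_2^\delta(n,-n)| = \sum_{l=1}^{m} \binom{2m}{m-l}.
\]
After the change of index $j = m - l$, this is $\sum_{j=0}^{m-1}\binom{2m}{j}$.

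The final step is a routine binomial identity. Using $\sum_{j=0}^{2m}\binom{2m}{j} = 2^{2m} = 2^n$ together with the symmetry $\binom{2m}{j} = \binom{2m}{2m-j}$, the two tails $\sum_{j=0}^{m-1}\binom{2m}{j}$ and $\sum_{j=m+1}^{2m}\binom{2m}{j}$ are equal, so each equals $\frac{1}{2}(2^n - \binom{n}{m})$, yielding the stated formula. The only potential pitfall — the ``main obstacle'' such as it is — is making sure the lower limit of the sum is $l = 1$ rather than $l = 0$; this is forced by the requirement that annular pairings genuinely connect the two circles, which in turn reflects Corollary \ref{cor:sub_leading_order} and the fact that the purely ``non-crossing, no through strings'' case contributes to the leading $O(1)$ term of $\E(\tr(X^n))$, not to the $N^{-1}$ correction.
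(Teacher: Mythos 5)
Your proof is correct, and the combinatorial identity at the end is handled the same way as in the paper. Where you diverge is in how you obtain the count $\binom{n}{m-k}$ of diagrams with a fixed number $2k$ of through strings. The paper argues directly in $NC_2^\delta(n,-n)$: it quotes the general count of non-crossing $(p,q)$-annular pairings with $l$ through strings, $l\binom{p}{(p-l)/2}\binom{q}{(q-l)/2}$, from \cite[Eq.~(11)]{mst}, and then explains why the $\delta$-symmetry collapses this to a single binomial coefficient --- once the non-through strings on the outer circle are placed, the inner circle is forced by condition (\textit{iii}) and the through strings are then uniquely determined. You instead pass through the bijection $NC_2^\delta(n,-n)\leftrightarrow NCC_2(n)$ of Remark~\ref{remark:bijection} and quote the half-pairing count $\binom{n}{(n-k)/2}$ from \cite[Lemma~13]{amsw}. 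Both routes are legitimate and both references are already invoked in the paper; your version is arguably tidier because it leans on the bijection the paper has already built rather than re-deriving the symmetry collapse from the unsymmetrized annular count, while the paper's version is self-contained within the annular picture and makes the geometric ``forced through strings'' mechanism explicit. Your justification of the lower limit $l\geq 1$ (annular pairings must have at least one through pair, so $|V|\geq 2$) is sound and matches the implicit range $k=1,\dots,m$ in the paper.
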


\begin{proof}
We know that the number of non-crossing annular pairings of
an $(p,q)$-annulus with $l$ through strings is $l
\binom{p}{\frac{p-l}{2}} \binom{q}{\frac{q-l}{2}}$ (see
e.g. \cite[Eq. (11)]{mst}). In our situation $p = q = 2m$,
$l = 2k$ is even and the pairings on the circles are
symmetric so we only get $\binom{n}{m-k}$ diagrams, because
once the non-through strings are placed there is only one way
to place the through strings.
\begin{eqnarray*}\lefteqn{
2 \sum_{k=1}^m \binom{n}{m-k} 
= 
2\bigg\{ \binom{n}{0} + \binom{n}{1} + \cdots + \binom{n}{m-1}\bigg\}} \\
&=&
\binom{n}{0}  + \cdots + \binom{n}{m-1}
+
\binom{n}{m+1} + \cdots + \binom{n}{n-1} +  \binom{n}{n} \\
&=&
2^n - \binom{n}{m}.
\end{eqnarray*}
\end{proof}

\begin{theorem}
Let $\nu_1 = \frac{1}{2}(\delta_{-2} + \delta_2)$ be the
Bernoulli distribution and $d\nu_2(t) = \frac{1}{\pi}
\frac{1}{\sqrt{4 - t^2}} \, dt$ be the arcsine law. Let
$\mu' = \frac{1}{2}(\nu_1 - \nu_2)$. Let $X_N$ be the $N
\times N$ \goe{} and $m_n$ the $n^{th}$ moment of the
semi-circle law. Then
\[
\lim_N N(\E(\tr(X^n)) - m_n) = m_n'
\]
and
\[
m_n' = \int t^n \, d\mu'(t).
\]
\end{theorem}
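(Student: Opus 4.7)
The first equality, $\lim_N N(\E(\tr(X_N^n)) - m_n) = m_n'$, is precisely the content of Theorem \ref{thm:inf_moments_diagrams}, so nothing remains to be done on that front. The real content is the moment identification, and for this my plan is to reduce everything to the closed form $m'_n = \tfrac{1}{2}(2^n - \binom{n}{m})$ for $n = 2m$ (and $m'_n = 0$ for $n$ odd) provided by Lemma \ref{lemma:inf_moments}, and then verify that $\int t^n\,d\mu'(t)$ evaluates to the same expression.

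The odd case is immediate: both $\nu_1$ and $\nu_2$ are symmetric about $0$, so all odd moments of $\mu'$ vanish, matching $m'_n = 0$. For the even case $n = 2m$, I would compute the two pieces separately. The Bernoulli part is trivial:
\[
\int t^n\, d\nu_1(t) = \tfrac{1}{2}\bigl(2^n + (-2)^n\bigr) = 2^n.
\]
For the arcsine part, I would apply the substitution $t = 2\cos\theta$, under which $dt = -2\sin\theta\,d\theta$ and $\sqrt{4-t^2} = 2\sin\theta$, so that
\[
\int_{-2}^{2} t^n \,d\nu_2(t) = \frac{2^n}{\pi}\int_0^\pi \cos^n\theta\, d\theta.
\]
The standard Wallis evaluation gives $\int_0^\pi \cos^{2m}\theta\,d\theta = \pi\binom{2m}{m}/4^m$, hence the arcsine moment equals $\binom{n}{m}$.

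Combining,
\[
\int t^n\, d\mu'(t) = \tfrac{1}{2}\bigl(2^n - \tbinom{n}{m}\bigr),
\]
which agrees with $m'_n$ as computed in Lemma \ref{lemma:inf_moments}. There is no genuine obstacle here; the combinatorial work was carried out in Lemma \ref{lemma:inf_moments}, and the analytic side reduces to a one-line Wallis integral after the $t = 2\cos\theta$ substitution. The only mild care needed is to handle the odd moments by the symmetry observation before starting the Wallis computation, to avoid having to write $\cos^n\theta$ integrals with odd exponents.
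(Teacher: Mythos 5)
Your proof is correct and is essentially the only natural argument here; the paper in fact gives no explicit proof of this theorem, stating it immediately after Lemma \ref{lemma:inf_moments}, so you have simply supplied the intended verification. Your reduction to $m'_n = \tfrac{1}{2}(2^n - \binom{n}{m})$ via the lemma, and the Wallis evaluation of the arcsine moment after the substitution $t = 2\cos\theta$, is exactly the computation the author leaves to the reader.
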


\section{Infinitesimal cumulants of the goe}
\label{section:inf_cum_goe}
Recall that if a partition has all blocks of even size then
it is called an \textit{even partition}. We already know
that for the infinitesimal \goe{} we have $\kappa_2 = 1$ and
all other $\kappa_n = 0$ (i.e. the semi-circle law). We
shall show that $\kappa'_{2m} =1$ and $\kappa'_{2m-1} = 0$
for all $m \geq 1$. This means that for an even non-crossing
partition of $[n]$ (using the notation of
Eq. (\ref{eq:diff_cumulant}))
\begin{equation}\label{eq:inf_cum_part_goe}
\partial\kappa_\pi =
\begin{cases}
\frac{n}{2} & \pi \mbox{\ is a pairing}\\
1 & \pi\ \vtop{\hsize
  180pt\noindent\raggedright\baselineskip=12pt has one block
  of size $k$ and all others of size 2} \\
0 & \pi\ \vtop{\hsize
  180pt\noindent\raggedright\baselineskip=12pt has at least
  2 blocks with more than 2 elements}
\end{cases}
\end{equation}
From Kreweras \cite[Th\'eor\`eme 4]{krew} we know that that
number of partitions with one block of size $k > 2$ and all
others of size 2 is $\binom{n}{\frac{n+k}{2}}$.

Using the cumulant-moment formula
(Eq.~(\ref{eq:infinitesimal_cumulant_moment})) we have
\[
\kappa'_1 = m'_1
\]
\[
\kappa'_2 = m'_2 - 2 m'_1 m_1
\]
\[
\kappa'_3 = m'_3 - 3m'_1 m_2 - 3 m_1 m'_2 + 6 m'_1 m_1 m_1
\]
\begin{multline*}
\kappa'_4 = m'_4 - 4m'_1 m_3 - 4 m_1 m'_3 - 4 m'_2 m_2 + 12 m'_2 (m_1)^2 \\
 + 24 m'_1 m_2 m_1 - 24 m'_1 (m_1)^3
\end{multline*}
All these formulas can be obtained by assuming an implicit
dependence on a parameter $t$ and applying $\ds \frac{d}{dt}\Big|_{t=0}$ to both sides of
\[
\kappa_n = \sum_{\pi \in NC(n)} \kappa_\pi
\mbox{\ with\ }
\kappa'_n = \ds \frac{d}{dt}\Big|_{t=0} \kappa_n 
\mbox{\ and\ }
m'_n = \ds \frac{d}{dt}\Big|_{t=0} m_n. 
\]
Since $m'_1 = m'_3 = 0$ and $m'_2 = 1$ and $m'_4 = 5$ we
have $\kappa'_1 = \kappa'_3 = 0$ and $\kappa'_2 = \kappa'_4
= 1$. 
\begin{lemma}\label{lemma:odd_inf_cuml}
If $n$ is odd then $\kappa'_n = 0$.
\end{lemma}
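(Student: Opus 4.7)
The plan is to argue directly from the Möbius inversion displayed earlier in the paper:
\[
\kappa'_n = \sum_{\pi \in NC(n)} \mu(\pi, 1_n)\, \partial\phi_\pi,
\]
where, since all arguments equal $X$, we have $\partial\phi_\pi = \sum_{V \in \pi} m'_{|V|} \prod_{W \in \pi, W \neq V} m_{|W|}$. I will show that every summand vanishes when $n$ is odd.

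First I would record two parity facts. On the moment side, $m_k = 0$ for $k$ odd, since $\mu$ is the semicircle law. On the infinitesimal-moment side, the preceding theorem gives $m'_k = |NC_2^\delta(k,-k)| = 0$ for $k$ odd. Thus, in the product $m'_{|V|}\prod_{W\neq V} m_{|W|}$, a nonzero contribution requires $|V|$ even \emph{and} $|W|$ even for every other block $W$ of $\pi$.

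Now suppose $n$ is odd and fix any $\pi \in NC(n)$. Since $\sum_{U \in \pi}|U| = n$ is odd, $\pi$ must contain at least one block of odd cardinality. If the distinguished block $V$ has odd size, then $m'_{|V|}=0$ and the term vanishes; otherwise there is some other block $W\neq V$ of odd size, and then $m_{|W|}=0$ kills the term. Either way $\partial\phi_\pi = 0$. Summing over $\pi \in NC(n)$ yields $\kappa'_n = 0$.

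There is no real obstacle here: the argument is a direct parity check that piggybacks on the already-established vanishing of odd moments and odd infinitesimal moments. The only thing worth flagging is that one must use the Möbius inversion of \eqref{eq:inf_moment_cumulant}, not an inductive/recursive expansion, so that each term is a product of genuine moments and infinitesimal moments whose parities we have just controlled.
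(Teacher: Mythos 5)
Your proof is correct and follows the same route as the paper: Möbius inversion to write $\kappa'_n = \sum_{\pi \in NC(n)} \mu(\pi,1_n)\,\partial m_\pi$, then the observation that any $\pi$ partitioning an odd $n$ must contain an odd-size block, which forces either the $m'$-factor or one of the $m$-factors in each summand of $\partial m_\pi$ to vanish (using the already-established vanishing of odd moments and odd infinitesimal moments). The paper states the parity case split slightly more tersely, but the argument is identical.
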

\begin{proof}
Let us recall some standard notation. For $\pi \in \cP(n)$
we let $m_\pi = \prod_{V \in \pi} m_{|V|}$. We set
\[
\partial m_\pi = \sum_{V \in \pi} m'_{|V|} 
\mathop{\prod_{W \in \pi}}_{W \not = V}
m_{|W|}
\]
The moment-cumulant formula
\[
\kappa_n = \sum_{\pi \in NC(n)} \mu(\pi, 1_n) m_\pi
\]
becomes
\[
\kappa'_n = \sum_{\pi \in NC(n)} \mu(\pi, 1_n) \partial m_\pi.
\]
We have seen that both $m_n = 0$ and $m'_n = 0$ for $n$
odd. If $n$ is odd and $\pi \in \cP(n)$ then $\pi$ must have
a block of odd size. Thus $\partial m_\pi = 0$. Hence for
$n$ odd $\kappa'_n = 0$.
\end{proof}

\begin{theorem}\label{thm:inf_goe_cum}
For $n$ even $\kappa'_n = 1$.
\end{theorem}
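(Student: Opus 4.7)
The plan is to induct on even $n \geq 2$, combining the moment--cumulant formula for $m'_n$ with the identification $m'_n = |NCC_2(n)|$ that follows from Theorem \ref{thm:inf_moments_diagrams} and the bijection in Remark \ref{remark:bijection}. The base case $n = 2$ is immediate from the formula $\kappa'_2 = m'_2 - 2 m'_1 m_1$ displayed just above Lemma \ref{lemma:odd_inf_cuml}: since $m'_1 = 0$ and $m'_2 = 1$, we get $\kappa'_2 = 1$.

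For the inductive step, assume $\kappa'_k = 1$ for every even $k$ with $2 \leq k < n$; Lemma \ref{lemma:odd_inf_cuml} already handles $\kappa'_k = 0$ for odd $k$. Since the semicircle cumulants satisfy $\kappa_2 = 1$ and $\kappa_k = 0$ for $k \neq 2$, in
\[
\partial \kappa_\pi = \sum_{V \in \pi} \kappa'_{|V|} \mathop{\prod_{W \in \pi}}_{W \neq V} \kappa_{|W|},
\]
the product over $W \neq V$ vanishes unless every such $W$ is a pair; combined with the vanishing of odd $\kappa'_k$, the surviving terms in $m'_n = \sum_{\pi \in NC(n)} \partial \kappa_\pi$ are indexed precisely by pairs $(\pi, V)$ with $V$ a block of $\pi$ of even size and every other block of $\pi$ a pair. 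By the definition of $NCC_2(n)$, these are exactly its elements, and each contributes $\kappa'_{|V|}$:
\[
m'_n = \sum_{(\pi, V) \in NCC_2(n)} \kappa'_{|V|}.
\]

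Now Theorem \ref{thm:inf_moments_diagrams} together with Remark \ref{remark:bijection} gives $|NCC_2(n)| = |NC_2^\delta(n, -n)| = m'_n$. There is exactly one $(\pi, V) \in NCC_2(n)$ with $|V| = n$, namely $\pi = \{[n]\}$ with $V = [n]$; every other element has $|V| < n$ and contributes $\kappa'_{|V|} = 1$ by the inductive hypothesis. Therefore
\[
m'_n = (|NCC_2(n)| - 1) + \kappa'_n = m'_n - 1 + \kappa'_n,
\]
forcing $\kappa'_n = 1$.

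The only step requiring care is matching the contributing pairs $(\pi, V)$ in the cumulant expansion with the formal definition of $NCC_2(n)$. The delicate case is the stratum $|V| = 2$: each pair partition, combined with a choice of one of its $n/2$ pairs as the distinguished block $V$, contributes once, consistent with the count $\binom{n}{(n-2)/2}$ from Remark \ref{remark:bijection} via the standard identity $(n/2) C_{n/2} = \binom{n}{(n-2)/2}$. Once this bookkeeping is set up, there are no further obstacles.
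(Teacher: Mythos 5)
Your proof is correct and follows essentially the same route as the paper: induct on even $n$, expand $m'_n = \sum_{\pi \in NC(n)} \sum_{V \in \pi} \kappa'_{\pi,V}$, observe that the semicircular cumulants and Lemma \ref{lemma:odd_inf_cuml} force the surviving terms to be indexed by $NCC_2(n)$, isolate the single term with $\pi = 1_n$, and cancel against $m'_n = |NCC_2(n)|$ (which in the paper is the content of Lemma \ref{lemma:inf_moments} combined with Remark \ref{remark:bijection}). The closing sanity check on the $|V|=2$ stratum is not strictly needed but does no harm.
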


\begin{proof}
We have already shown that $\kappa'_2 = \kappa'_4 =
1$. Suppose that we have shown that $\kappa'_2 = \cdots =
\kappa'_{n-2} = 1$. We shall prove that $\kappa'_n =
1$. From the infinitesimal moment-cumulant formula
(\ref{eq:inf_moment_cumulant}) we have that
\[
m'_n = \sum_{\pi \in NC(n)} \sum_{V \in \pi}
\kappa'_{\pi, V}. 
\]
By induction we have that $\kappa'_{\pi, V} = 1$ for $(\pi,
V) \in NCC_2(n)$ and $\pi \not= 1_n$. Moreover by Lemma
\ref{lemma:odd_inf_cuml} we have that $\kappa'_{\pi, V} = 0$
if $(\pi, V) \not\in NCC_2(n)$. Hence
\[
m'_n - \kappa'_n = 
\mathop{\sum_{\pi \in NC(n)}}_{\pi \not= 1_n}
 \sum_{V \in \pi} \kappa'_{\pi, V} = |NCC_2(n)| - 1.
\] 
Since we have by Lemma \ref{lemma:inf_moments} that $m'_n =
|NCC_2(n)|$ we must have $\kappa'_n = 1$ as claimed.
\end{proof}

\begin{corollary}\label{cor:inf_r_transform}
For the infinitesimal \goe{} we have the infinitesimal
$r$-transform is given by $r(z) = \frac{z}{1 - z^2}$.
\end{corollary}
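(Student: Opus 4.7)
The plan is to substitute the known values of the infinitesimal cumulants directly into the defining power series of the infinitesimal $r$-transform, and recognize the result as a geometric series.

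Recall from \S\ref{section:infinitesimal_freeness} that the infinitesimal $r$-transform is defined by
\[
r(z) = \kappa'_1 + \kappa'_2 z + \kappa'_3 z^2 + \kappa'_4 z^3 + \cdots = \sum_{n=1}^{\infty} \kappa'_n z^{n-1}.
\]
First I would invoke Lemma \ref{lemma:odd_inf_cuml}, which tells us $\kappa'_n = 0$ whenever $n$ is odd, so every even-power coefficient of $z$ in $r(z)$ vanishes. Next I would apply Theorem \ref{thm:inf_goe_cum}, which gives $\kappa'_n = 1$ for every even $n$; this fills in the odd-power coefficients of $z$ as $1$.

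Combining these, I would write
\[
r(z) = z + z^3 + z^5 + \cdots = z \sum_{m=0}^{\infty} z^{2m} = \frac{z}{1-z^2},
\]
the last equality being the standard geometric series summation valid for $|z| < 1$ (and as an identity of formal power series in general).

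There is no real obstacle here: the corollary is essentially a bookkeeping step that packages the cumulant values computed in Theorem \ref{thm:inf_goe_cum} and Lemma \ref{lemma:odd_inf_cuml} into their generating function. All the combinatorial work has already been done in the preceding lemmas, and the only thing to notice is the geometric-series identity.
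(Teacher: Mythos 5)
Your proof is correct and is essentially the argument the paper intends: the corollary follows immediately by substituting $\kappa'_n = 0$ ($n$ odd, from Lemma \ref{lemma:odd_inf_cuml}) and $\kappa'_n = 1$ ($n$ even, from Theorem \ref{thm:inf_goe_cum}) into the definition $r(z) = \sum_{n\ge 1} \kappa'_n z^{n-1}$ and summing the geometric series.
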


\begin{remark}
By Corollary \ref{cor:inf_r_transform} and
Eq. (\ref{eq:infinitesimal_r-transform}) we have that the
infinitesimal Cauchy transform of $\mu'$ is
\[
g(z) = - r(G(z)) G'(z) = \frac{G(z)}{z^2 - 4} = \frac{z -
  \sqrt{z^2 - 4}}{2(z^2 - 4)}.
\]
Note that in accordance with
Eq.~(\ref{eq:infinitesimal_goe}), $g$ has poles at $z = 2$
and $z = -2$ each with residue $\frac{1}{4}$.
\end{remark}

Now that we have the infinitesimal cumulants we can easily
see that independent \goe{}'s cannot be asymptotically free. 

\begin{proposition}\label{cor:nonfreeness_goe}
Let $\{X_N\}_N$ and $\{Y_N\}_N$ be independent ensembles of
\goe{} random matrices. Then $\{X_N\}_N$ and $\{Y_N\}_N$ are
not asymptotically infinitesimally free.
\end{proposition}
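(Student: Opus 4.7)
The plan is to combine three ingredients already established in the paper: (i) Theorem \ref{thm:vanishing_mixed_cumulants}, which characterizes asymptotic infinitesimal freeness by the vanishing of all mixed infinitesimal cumulants; (ii) Remark \ref{rem:independent_sum}, which says that $Z_N := (X_N + Y_N)/\sqrt{2}$ is again a \goe{} random matrix; and (iii) Theorem \ref{thm:inf_goe_cum}, which gives $\kappa'_n = 1$ for every even $n$ in the \goe{}. I argue by contradiction, avoiding any direct calculation of a mixed cumulant by instead exploiting the multilinearity of $\kappa'_n$ against a normalized sum of two independent \goe{}s.

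Assume $\{X_N\}_N$ and $\{Y_N\}_N$ are asymptotically infinitesimally free, and pass to the limit infinitesimal probability space, writing $x$ and $y$ for the limits and $z = (x+y)/\sqrt{2}$. By multilinearity of $\kappa'_n$, expanding $z$ in each of the $n$ slots gives
\[
\kappa'_n(z, \dots, z) \;=\; 2^{-n/2} \sum_{(i_1, \dots, i_n) \in \{x, y\}^n} \kappa'_n(i_1, \dots, i_n).
\]
By Theorem \ref{thm:vanishing_mixed_cumulants}, every term whose entries are not all equal must vanish, so only the two pure terms $\kappa'_n(x, \dots, x)$ and $\kappa'_n(y, \dots, y)$ survive. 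Each equals $1$ by Theorem \ref{thm:inf_goe_cum}, yielding
\[
\kappa'_n(z, \dots, z) \;=\; 2^{-n/2}(1 + 1) \;=\; 2^{1 - n/2}.
\]
On the other hand, Remark \ref{rem:independent_sum} says that $z$ is itself the limit of a \goe{} ensemble, so Theorem \ref{thm:inf_goe_cum} forces $\kappa'_n(z, \dots, z) = 1$ for all even $n$. Comparing the two expressions gives $1 = 2^{1-n/2}$.

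Observe that $n = 2$ satisfies this identity, which is consistent with the fact that the infinitesimal second cumulant does not detect the failure of freeness, so the argument gains nothing at this level. One must pass to $n = 4$ (or any larger even $n$), where $1 \neq 1/2$, to reach a contradiction. The step that one might have expected to require real work -- producing an explicit nonvanishing mixed infinitesimal cumulant of $X$ and $Y$ -- is bypassed entirely by the sum-of-two-\goe{}s trick, so the only thing to verify is the multilinearity of $\kappa'_n$, which is immediate from its definition as a multilinear functional on $\cA^{\otimes n}$.
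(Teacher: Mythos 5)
Your proof is correct and follows essentially the same route as the paper: both argue by contradiction via the observation that $Z_N = (X_N + Y_N)/\sqrt{2}$ is again a \goe{} matrix, combine Theorem \ref{thm:inf_goe_cum} with the vanishing of mixed infinitesimal cumulants (Theorem \ref{thm:vanishing_mixed_cumulants}), and compare $\kappa'_n(z,\dots,z) = 2^{1-n/2}$ against the required value $1$. The only difference is cosmetic — you spell out the multilinearity expansion and explicitly note that the contradiction first appears at $n=4$, both of which the paper leaves implicit.
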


\begin{proof}
Suppose $\{X_N\}_N$ and $\{Y_N\}_N$ were asymptotically
infinitesimally free. Then there would be an infinitesimal
probability space $(\cA, \phi, \phi')$ and $x, y \in \cA$
which are infinitesimally free such that for all $k$
\[
\lim_N \E(\tr(X_N^k)) = \phi(x^k) \mathrm{\ and\ } 
\lim_N N( \E(\tr(X_N^k)) - \phi(x^k)) = \phi'(x^k)
\]
\hfill and\hfill\hbox{}
\[
\lim_N \E(\tr(Y_N^k)) = \phi(y^k) \mathrm{\ and\ } 
\lim_N N( \E(\tr(Y_N^k)) - \phi(y^k)) = \phi'(y^k).
\]
Let $Z_N = \frac{1}{\sqrt{2}}( X_N + Y_N)$ and $z =
\frac{1}{\sqrt{2}}(x + y)$. Then by Remark
\ref{rem:independent_sum}, $\{Z_N\}_N$ is also a \goe{}
random matrix and so by Theorem \ref{thm:inf_goe_cum}, for
all $n$
\[
{\kappa'}_{2n}^{(x)} = {\kappa'}_{2n}^{(y)} = {\kappa'}_{2n}^{(z)} = 1.
\]
On the other hand by our assumption of infinitesimal
freeness we have by the vanishing of mixed cumulants
(Thm. \ref{thm:vanishing_mixed_cumulants}) that
${\kappa'}_{2n}^{(z)} = 2^{-n +1}$. This contradiction shows
that the ensembles $\{X_N\}_N$ and $\{Y_N\}_N$ cannot be
asymptotically infinitesimally free. \end{proof}

\begin{remark}
Independent \goe{} random matrices are not asymptotically
second order free but are asymptotically real second order
free (see Redelmeier \cite{r}). Thus there may be a positive
statement one can make in the orthogonal case, see Remark
\ref{remark:possible_result}.
\end{remark}

\section{Asymptotic infinitesimal freeness\\
         for complex wishart matrices}\label{section:asymptotic}

To discuss asymptotic infinitesimal freeness we shall make
use of the algebra $\bC \la Y_1, \dots, Y_s \ra$ of
polynomials in the non-commuting variables $Y_1, \dots,
Y_s$. Given elements $x_1, \dots, x_s$ in an infinitesimal
probability space $(\cA, \phi, \phi')$ we get two linear
functionals on $\bC\la Y_1, \dots, Y_s\ra$ given by
$\mu(Y_{i_1} \cdots Y_{i_n}) = \phi(x_{i_1} \cdots x_{i_n})$
and $\mu'(Y_{i_1} \cdots T_{i_n}) = \phi'(x_{i_1} \cdots
x_{i_n})$ for all $i_1, \dots, i_n \in [s]$. We call the
pair $(\mu, \mu')$ the \textit{algebraic infinitesimal
  distribution} of $x_1, \dots, x_s$.

For example if $X^{(N)}_1, \cdots, X^{(N)}_s$ are
independent $N \times N$ complex Wishart random matrices
then using \cite[Cor. 9.6]{mn}, we can use the $1/N$
expansion of $\E(\tr(X^{(N)}_{i_1}\ab \cdots
X^{(N)}_{i_n}))$ to define a pair $(\mu_N, \mu'_N)$.  We let
for $i_1, \dots, i_n \in [s]$
\begin{equation}\label{eq:mp_limit_dist}
\mu_N(Y_{i_1} \cdots Y_{i_n}) =
\E(\tr(X_{i_1} \cdots X_{i_n})) 
\mbox{\ and\ }
\mu(Y_{i_1} \cdots Y_{i_n}) =
\mathop{\sum_{\pi \in NC(n)}}_{\pi \leq \ker(i)}
c^{\#(\pi)}.
\end{equation}
Then we set
\begin{equation*}\mu'_N(I) = 0 \mathrm{\ and}
\end{equation*}
\begin{equation}\label{eq:inf_mom_cx_wish}
\mu'_N(Y_{i_1} \cdots Y_{i_n}) = 
N(\mu_N(Y_{i_1} \cdots Y_{i_n}) - \mu(Y_{i_1} \cdots Y_{i_n}))
\end{equation}
Finally we set
\begin{equation}\label{eq:inf_mixed_mom_cx_wish}
\mu'(Y_{i_1} \cdots Y_{i_n}) 
=
\mathop{\sum_{\pi \in NC(n)}}_{\pi \leq \ker(i)}%
c'\, \#(\pi)\, c^{\#(\pi)-1}.
\end{equation}

Recall here that $S_n$ is the symmetric group on $[n]$,
$\gamma_n = (1, 2, 3, \dots,\ab n)$, and $\#(\pi)$ is the
number of cycles in the cycle decomposition of $\pi$.  
Also $\ker(i) \in \cP(n)$ is the kernel of $i$ as described 
in Remark \ref{rem:independent_sum} page~\pageref{rem:independent_sum}. 

\begin{theorem}\label{thm:asy_inf_free_cx_wish}
Let $\{X^{(N)}_1, \dots, X^{(N)}_s\}_N$ be an independent
family of complex $N \times N$ Wishart matrices. Assume that
$\lim_N M/N = c$ and $\lim_N (M - Nc) = c'$. Then
$\{X^{(N)}_1, \dots, X^{(N)}_s\}$ are asymptotically
infinitesimally free and the infinitesimal cumulants of the
limit infinitesimal distribution are given by $\kappa'_n =
c'$ for all $n \geq 1$. The limit infinitesimal distribution
is $(\mu, \mu')$ where $\mu$ is the Marchenko-Pastur
distribution with parameter $c$ and $\mu'$ is given by
\begin{equation}\tag{2}
d\mu'(x) = -c'
\begin{cases}
\phantom{\frac{1}{2}}\delta_0 -  \frac{x + 1 -c}
{2 \strut\pi x \sqrt{(b - x)(x - a)}}\,dx & c < 1 \\
\frac{1}{2} \delta_0 -  \frac{1}{2\pi\sqrt{x(4 - x)}}\,dx & c = 1 \\
\phantom{\frac{1}{2} \delta_0}
-\frac{x + 1 -c}
{2 \strut\pi x \sqrt{(b - x)(x - a)}}\,dx  & c > 1 \\
\end{cases}
\end{equation}
where $a = (1 -  \sqrt c)^2$ and $b = (1 + \sqrt c)^2$. 
\end{theorem}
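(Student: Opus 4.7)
\medskip
\noindent\textbf{Proof proposal.} The plan is to combine the combinatorial $1/N$-expansion for mixed moments of independent complex Wishart matrices from \cite[Cor.~9.6]{mn} with the infinitesimal moment--cumulant formulae of Section~\ref{section:infinitesimal_freeness}, and then to invert the resulting infinitesimal $r$-transform to recover the explicit density of $\mu'$.

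First I would invoke \cite[Cor.~9.6]{mn} to obtain the asymptotic expansion of $\E(\tr(X^{(N)}_{i_1}\cdots X^{(N)}_{i_n}))$ through order $N^{-1}$. Under the hypotheses $\lim_N M/N = c$ and $\lim_N (M-Nc) = c'$ this gives $\mu_N(Y_{i_1}\cdots Y_{i_n}) \to \mu(Y_{i_1}\cdots Y_{i_n})$ and $\mu'_N(Y_{i_1}\cdots Y_{i_n}) = N(\mu_N-\mu)(Y_{i_1}\cdots Y_{i_n}) \to \mu'(Y_{i_1}\cdots Y_{i_n})$, with $\mu$ and $\mu'$ as in Eqs.~(\ref{eq:mp_limit_dist}) and~(\ref{eq:inf_mixed_mom_cx_wish}). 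In particular the algebraic limit infinitesimal distribution exists.

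Second, I would translate these formulae into cumulant language. The example in Section~\ref{section:infinitesimal_freeness} shows that if $\kappa_n(x_{i_1},\dots,x_{i_n}) = c$ and $\kappa'_n(x_{i_1},\dots,x_{i_n}) = c'$ when $i_1=\cdots=i_n$ and both vanish otherwise, then $\kappa_\pi(x_{i_1},\dots,x_{i_n}) = c^{\#(\pi)}\,\mathds{1}_{\pi \leq \ker(i)}$ and $\partial\kappa_\pi(x_{i_1},\dots,x_{i_n}) = c'\,\#(\pi)\,c^{\#(\pi)-1}\,\mathds{1}_{\pi \leq \ker(i)}$. Substituting these into the moment--cumulant formulae~(\ref{eq:moment-cumulant}) and~(\ref{eq:inf_moment_cumulant}) reproduces exactly the expressions for $\mu$ and $\mu'$ above. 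Hence the limit cumulants of each $x_i$ are $\kappa_n = c$ and $\kappa'_n = c'$, while all mixed cumulants vanish, so by Theorem~\ref{thm:vanishing_mixed_cumulants} the variables $x_1,\dots,x_s$ are infinitesimally free.

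Third, to obtain the explicit form of $\mu'$ I would compute the infinitesimal $r$-transform of a single limit variable,
\[
r(z) \;=\; \sum_{n\geq 1} \kappa'_n\, z^{n-1} \;=\; \frac{c'}{1-z},
\]
and apply Eq.~(\ref{eq:infinitesimal_r-transform}). With the standard Marchenko--Pastur Cauchy transform $G(z) = \bigl(z+1-c - \sqrt{(z-a)(z-b)}\bigr)/(2z)$, where $a=(1-\sqrt c)^2$, $b=(1+\sqrt c)^2$, implicit differentiation of $zG^2-(z+1-c)G+1=0$ yields $G'(z) = -G(z)(1-G(z))/\sqrt{(z-a)(z-b)}$, whence
\[
g(z) \;=\; -r(G(z))\, G'(z) \;=\; \frac{c'\, G(z)}{\sqrt{(z-a)(z-b)}}.
\]
The continuous density of $\mu'$ on $(a,b)$ is then $-\frac{1}{\pi}\im g(x+i0^+)$ and any atoms are read off from the residues of $g$.

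The main obstacle is the Stieltjes inversion across the three regimes $c<1$, $c=1$, $c>1$. The atom at $0$ is controlled by the behaviour of $G$ at $0$: $G$ has a simple pole of residue $1-c$ when $c<1$, an inverse--square-root singularity when $c=1$, and is regular when $c>1$. A careful computation of $\lim_{z\to 0} z\, g(z)$ in each case, using the branch of $\sqrt{(z-a)(z-b)}$ that behaves like $z$ at infinity, yields the atom weights $-c'$, $-c'/2$, and $0$ respectively; the continuous part in each case reduces to $c'(x+1-c)/(2\pi x \sqrt{(b-x)(x-a)})$ as in the statement. Everything else is formal bookkeeping.
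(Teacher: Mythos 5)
Your proposal follows essentially the same route as the paper: the $1/N$-expansion of mixed moments from \cite{mn}, identification of the constant infinitesimal cumulants $\kappa'_n = c'$ and vanishing of mixed ones via the $\pi \le \ker(i)$ condition, then $g = -r(G)G'$ with $r(z)=c'/(1-z)$ followed by Stieltjes inversion and a case analysis of $\lim_{\sa z\to 0} zg(z)$. The only difference is cosmetic: you use the implicit relation $zG^2-(z+1-c)G+1=0$ to obtain the compact form $g(z)=c'G(z)/\sqrt{(z-a)(z-b)}$, which is algebraically equal to (but tidier than) the paper's expression for $g$.
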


\begin{proof}

We shall show that
\[
\lim_N \mu_N = \mu \mbox{\ and\ } \lim_N \mu'_N = \mu'. 
\]
We have from \cite[Lemma 7.6]{mn}
\begin{multline}
\mu_N(Y_{i_1} \cdots Y_{i_n}) 
=
\mathop{\sum_{\pi \in S_n}}_{\pi \leq \ker(i)}
M^{\#(\pi)} N^{\#(\pi^{-1}\gamma) -(n + 1)}  \\
= 
\mathop{\sum_{\pi \in S_n}}_{\pi \leq \ker(i)}
\Big( \frac{M}{N}\Big)^{\#(\pi)}
N^{\#(\pi) + \#(\pi^{-1}\gamma) - (n + 1)}
\end{multline}
For
$\pi \in S_n$ we have $\#(\pi) + \#(\pi^{-1}\gamma_n) = n +
1 - 2g$ for some integer $g \geq 0$. 
Moreover the permutations $\pi$ for which $\#(\pi) +
\#(\pi^{-1}\gamma_n) = n + 1$ are exactly the non-crossing
partitions.  So for $\pi \in S_n \setminus NC(n)$ we have
$\#(\pi) + \#(\pi^{-1}\gamma) -n \leq -1$.  Thus
\[
\lim_{N \rightarrow \infty}
\mu_N(Y_{i_1} \cdots Y_{i_n}) =
\mu(Y_{i_1} \cdots Y_{i_n}).
\]
Since we have that $\pi \leq \ker(i)$ we have that mixed
cumulants vanish by the definition of $\mu$ in Equation
(\ref{eq:mp_limit_dist}). Thus we have that $X_1, \dots, X_s$ are
asymptotically free. Of course this is a known fact, see
\cite{cc}. Moreover we have
\begin{multline}
\mu_N'(Y_{i_1} \cdots Y_{i_n})
=
\mathop{\sum_{\pi \in NC(n)}}_{\pi \leq \ker(i)}
N \Big(\Big( \frac{M}{N}\Big)^{\#(\pi)} - c^{\#(\pi)}\Big) \\
\mbox{} + 
\mathop{\sum_{\pi \in S_n \setminus NC(n)}}_{\pi \leq \ker(i)}
\Big( \frac{M}{N}\Big)^{\#(\pi)} N^{\#(\pi) + \#(\pi^{-1}\gamma) - n}
\end{multline}
For any $\pi$ we have
\[
\lim_{N \rightarrow \infty} N \Big(\Big( \frac{M}{N}\Big)^{\#(\pi)} - c^{\#(\pi)}\Big)
= \#(\pi) c^{\#(\pi) -1} c'
\] 
and for $\pi \in S_n \setminus NC(n)$
\[
\lim_{N \rightarrow \infty}
\Big( \frac{M}{N}\Big)^{\#(\pi)} N^{\#(\pi) + \#(\pi^{-1}\gamma) - n}
= 0.
\]
Hence 
\[
\lim_{N \rightarrow \infty} \mu'_N(Y_{i_1} \cdots Y_{i_n})
= \mathop{\sum_{\pi \in NC(n)}}_{\pi \leq \ker(i)}
\#(\pi) c^{\#(\pi) -1 } c'
=
\mu'(Y_{i_1} \cdots Y_{i_n}).
\]
In the expression 
\[
\mu'(Y_{i_1} \cdots Y_{i_n}) =
\mathop{\sum_{\pi \in NC(n)}}_{\pi \leq \ker(i)}
\#(\pi) c^{\#(\pi) -1 } c'
\] 
the condition $\pi \leq \ker(i)$
shows that mixed infinitesimal cumulants also vanish. 
Thus $X_1, \dots, X_s$ are asymptotically infinitesimally 
free.

Note that when $\ker(i) = 1_n$ the right hand side of
Eq.~(\ref{eq:inf_mixed_mom_cx_wish}) is given by the
integral $\int t^n \,d\mu'(t)$ with this $\mu'$ the signed
measure in Eq.~(\ref{eq:inf_sing_mom_cx_wish}). Recall that
$\mu'(Y_1^n) = \sum_{\pi \in NC(n)} \partial \kappa_\pi$, so
Eq.~(\ref{eq:inf_mixed_mom_cx_wish}) shows that when
$\ker(i) = 1_n$ we have $\kappa'_n = c'$ for all $n$. 

Only Equation~(\ref{eq:inf_sing_mom_cx_wish}) remains to be
proved. We compute the infinitesimal Cauchy transform and
then use Stieltjes inversion. We have already shown that
$\kappa'_n = c'$ for all $n \geq 1$, thus $r(z) = c'/(1 -
z)$. Hence
\begin{multline*}
g(z) = - r(G(z)) G'(z)\\ = -c' \frac{G'(z)}{1 - G(z)} 
= \frac{-c'}{z P(z)}
\frac{(1 - c)^2 - (1 + c)z - (1 - c)P(z)}{P(z) + z -1 + c}  
\end{multline*}
where $P(z) = \sqrt{(z-a)(z-b)}$, and we choose the branch
as in \cite[Ex. 3.6]{ms2017}. Note that both $\big\{\mathop{\sum}_{\pi \in
  NC(n)} \big(\frac{M}{N}\big)^{\#(\pi)}\big\}_n$ and $\big\{
\sum_{\pi \in NC(n)} c^{\#(\pi)}\big\}_n$ are moment sequences
of positive measures, thus $g$ is the limit of the
difference of Cauchy transforms of positive measures and so
we can recover a signed measure by Stieltjes inversion. Now
let $Q(x) = \sqrt{(b - x)(x - a)}$ for $x \in [a, b]$ and
$Q(x) = 0$ for $x \not\in [a, b]$. For a function $f$ on
$\bC^+$ we let $f(x + i0^+) = \lim_{\epsilon \rightarrow
  0^+} f(x + i \epsilon)$. Then for $a \leq x \leq b$
\[
-\frac{1}{\pi} \im(g(x+i0^+)) = c' 
\frac{x + 1 - c}{2 \pi x Q(x)}. 
\]
As written above, $g$ has a singularity at $0$. For
$a \in \bR$ and $z \in \bC^+$ let us write $\lim_{\sa z \rightarrow a}$ for the non-tangential limit as $z$ approaches $a$ (see e.g \cite[p. 60]{ms2017}).
We have $\lim_{\sa z \rightarrow 0} P(z) + z - 1 + c = 0$ when $c > 1$,
so $ \lim_{\sa z \rightarrow 0} \frac{P(z) + z - 1 + c}{z} =
1 + P'(0) = 1 + \frac{1+c}{c -1} = \frac{2c}{c -1}$.  From
the equation for $g$ above we have
\[
z g(z) = \frac{c'(1 - c)}{P(z)}\Big\{ 1 + \frac{2c}{1 - c}
\frac{z}{P(z) + z - 1+ c}\Big\}
\]
and thus $\lim_{\sa z \rightarrow 0} z g(z) = 0$; so the
singularity at $0$ is removable. When $c < 1$ we have
$\lim_{\sa z \rightarrow 0} P(z) + z - 1 + c = 2(c-1)
\not=0$. Thus
\[
\lim_{\sa z \rightarrow 0}
\frac{z}{P(z) + z - 1+ c} = 0
\]
and hence $\lim_{\sa z \rightarrow 0} zg(z) =
\frac{c'(1-c)}{P(0)} =-c'$. When $c = 1$ we have
\[
zg(z) = \frac{2 c'}{z - 4 + P(z)}.
\]
and thus $\lim_{\sa z \rightarrow 0} zg(z) =
-\frac{c'}{2}$. Summarizing we have
\[
\lim_{\sa z \rightarrow 0} zg(z) =
\begin{cases}
-c'.          & c < 1 \\
-\frac{c'}{2} & c = 1 \\
0             & c > 1
\end{cases}
\]
We capture the weight of the mass at $0$ by
\cite[Prop.~3.8]{ms2017}. The singularities at $a$ and $b$
are removable.
\end{proof}

\begin{figure}\label{fig:densities}
\includegraphics[scale=0.7]{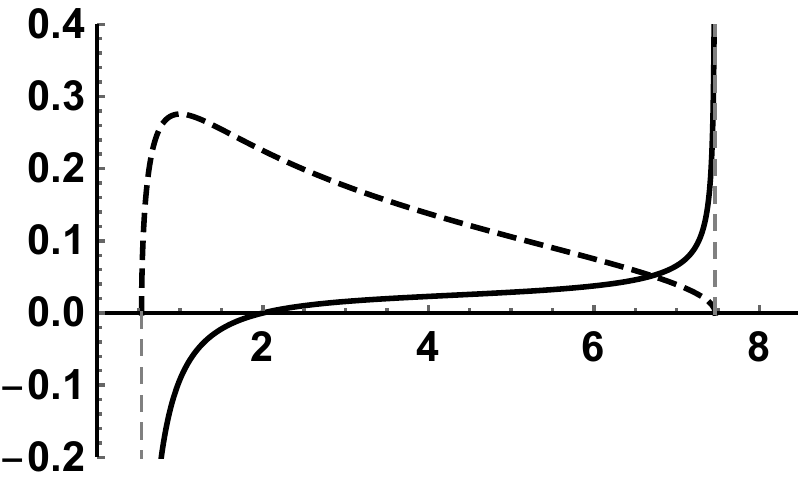}
\caption{The densities for $\mu$ (dashed) and $\mu'$ (solid) when $c = 3$ and $c' = 1$. }
\end{figure}

\begin{remark}\label{rem:formal_derivative}
To make the two distributions explicit let us summarize. For $c > 0$, $a = (1 - \sqrt{c})^2$ and $b = (1 + \sqrt{c})^2$ we have
\begin{equation}\label{eq:mp-density}
\int_a^b x^n \frac{\sqrt{(b - x)(x - a)}}{2 \pi x} \, dx
=
\sum_{\pi \in NC(n)} c^{\#(\pi)}
\end{equation}
\begin{equation}\label{eq:mp-inf-density}
\int_a^b x^n \frac{c '( x + 1 -c)}{ 2 \pi x \sqrt{(b - x)(x -a)}}
\, dx
= \sum_{\pi \in NC(n)} \#(\pi) c^{\#(\pi) -1 } c'.
\end{equation}
Note that we can also obtain (\ref{eq:mp-inf-density}) from
(\ref{eq:mp-density}) by formal differentiation by
$t$. Namely suppose that $c$ is an implicit function of $t$
and $c' = \frac{d}{dt}\big|_{t=0} c$. Then $(b - x)(x - a) =
-x^2 + 2(1 + c)x - (1 - c)^2$. So
\[
\frac{d}{dt}\Big|_{t=0} (b - x)(x - a) =
2 c'( x + 1 - c)
\]
and thus
\[
\frac{d}{dt}\Big|_{t=0}  \frac{\sqrt{(b - x)(x - a)}}{2 \pi x}
=
\frac{c '( x + 1 -c)}{ 2 \pi x \sqrt{(b - x)(x -a)}}.
\]
For $c \not= 1$, this formal operation picks up the mass at
$0$ if we say that $\frac{d}{dt}\big|_{t=0} (1 - c) \delta_0
= -c'\delta_0$. At $c = 1$ it seems a more delicate formal argument is
required. 
\end{remark}

\section{A universal rule for the \goe{} and constant matrices}
\label{section:universal}

We have already shown that independent \goe{} ensembles are
not asymptotically free. In this section we shall go a
little further and give a rule that shows a different kind
of freeness applies in the orthogonal case. First let us
recall a formula from \cite[Eq.(5.1)]{fn} for when $a_1,
\dots, a_n \in \cA_1$, $b_1, \dots, b_n \in \cA_2$ and
$\cA_1$ and $\cA_2$ are infinitesimally free
\begin{multline}\label{eq:inf_free_eqn}
\phi'(a_1b_1a_2b_2 \cdots a_nb_n) \\ 
= 
\sum_{\pi \in NC(n)} \{
\kappa_\pi(a_1, \cdots, a_n) \
                  \partial\phi_{K(\pi)}(b_1, \cdots, b_n) \\
\mbox{} +                          
\partial\kappa_\pi(a_1, \cdots, a_n)\ 
                          \phi_{K(\pi)}(b_1, \cdots, b_n)
                  \}
\end{multline} 
where $K(\pi)$ is the Kreweras complement of $\pi$. 

Suppose we have for each $N$, $A^{(N)}_1, \dots,
A^{(N)}_s$, $N \times N$ matrices  that have a joint limit 
$t$-distribution. Recall from \cite{mp} this means that
$\{A^{(N)}_1, A^{(N)\str}_1, \dots, A^{(N)}_s, A^{(N)\str}_s
\}$ has a joint limit distribution. Using our convention that
$A_i^{(1)} = A_i$ and $A_i^{(-1)} = A_i^\str$ this means
that for every $i_1, \dots, i_n$ and every $\epsilon_1,
\dots, \epsilon_s \in \{-1, 1\}$ the limit
\[
\lim_N \tr(A^{(N)(\epsilon_1)}_{i_1} \cdots A^{(N)(\epsilon_n)}_{i_n})
\]
exists; we denote this limit by $\phi(a_1^{(\epsilon_1)}
\cdots a_n^{(\epsilon_n)})$ where $a_1 \dots , a_s,
a_1^\str, \dots,\ab a^\str_s$ are in some non-commutative
infinitesimal space $(\cA, \phi, \phi')$ with a transpose
$a \mapsto a^\str$. Let us further suppose that $A_1, \dots, A_s$ have
a joint infinitesimal distribution. This means that for all
$i_1, \dots, i_n$ we have that
\begin{equation}\label{eq:infintesimal_limit}
\phi'(a_{i_1} \cdots a_{i_n}) = 
\lim_N N(\tr(A_{i_1} \cdots A_{i_n}) - \phi(a_{i_1} \cdots a_{i_n}))
\end{equation} 
exists. In order to describe the limiting behaviours we need
some notation.

\begin{figure}[t]\label{fig:noncrossing_annular_permutation}
\setbox1=\hbox{\includegraphics{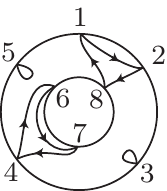}}
\hfill$\vcenter{\hsize\wd1\box1}$ \hfill $\vcenter{\hsize
  200pt\raggedright\noindent\small 
{\sc Figure 
\refstepcounter{figure}\thefigure}. 
The non-crossing annular permutation $\pi = (1, 2, 8)(3)(4, 6,
  7)(5)$. }$\hfill\hbox{}    
\end{figure}

\begin{notation}
Recall from \cite[Def. 3.5]{mn} that for integers $m, n \geq 1$ we let
$S_{NC}(m,n)$ be the non-crossing annular permutations. We shall
briefly recall the details. Let
$\gamma_{m,n} = (1, 2, 3, \dots, m)(m+1, \dots, m+n)$ be the
permutation in $S_{m+n}$ with two cycles. A permutation $\pi
\in S_{m+n}$ is \textit{non-crossing annular} if $\pi$ 
$\#(\pi) + \#(\pi^{-1}\gamma_{m,n}) = m + n$ and $\pi$
has at least one cycle that connects $\{1, 2, \dots, m\}$ to $\{m+1,
\dots, m+n\}$. Such cycles are called \textit{through
  cycles}. See Figure 6.
\end{notation}

\begin{remark}\label{remark:pair-hyperoctahedral}
Now let us recall a basic formula. Let $\sigma \in S_n$ be a
permutation and $A_1, \dots A_n$ be $N \times N$
matrices. We recall that $\Tr_\sigma (A_1, \dots, A_n)$ is
the product over the cycles of $\sigma$ of traces of
products of $A$'s. More precisely
\[
\Tr_\sigma(A_1, \dots, A_n) 
=
\mathop{\prod_{c \in \sigma}}_{c = (i_1, \dots, i_k)}
\Tr(A_{i_1}\cdots A_{i_k}).
\]
It is a standard result that
\[
\Tr_\sigma(A_1, \dots, A_n) =
\sum_{i_1, \dots i_n=1}^N 
a^{(1)}_{i_1i_{\sigma(1)}} a^{(2)}_{i_2i_{\sigma(2)}}
\cdots a^{(n)}_{i_ni_{\sigma(n)}}.
\]
We then let
\[
\tr_\sigma(A_1, \dots, A_n) = N^{-\#(\sigma)}
\Tr_\sigma(A_1, \dots, A_n).
\]
Let us recall next a formula from \cite[Lemma 5]{mp}. 
If $p \in \cP_2(\pm n)$ is a pairing of $[\pm n]$ then
there are $\sigma \in S_n$ and $\eta \in \bZ_2^n$ such that
\[
\mathop{\sum_{i_{\pm 1}, \dots, i_{\pm n} = 1}}_{\ker(i) \geq p}^N
a^{(1)}_{i_1i_{-1}} \cdots a^{(n)}_{i_ni_{-n}}
=
\Tr_\sigma\big(A^{(\eta_1)}, \dots, A^{(\eta_n)}\big)
\]
which are obtained as follows. According to Remark
\ref{remark:pair_decomposition} the cycles of $p \delta$
occur in pairs so we may write
\[
p\delta = c_1 c'_1 \cdots c_k c'_k
\]
where $c'_i = \delta c_i^{-1}\delta$. If $c_i = (j_1, \dots,
j_r)$ with $j_1, \dots, j_r \in [\pm n]$ we let $\tilde c_i
= (|j_1|, \dots, |j_r|)$ and $\eta_{j_i} = j_i/|j_i|$. For
example if $n = 4$ and $p = (1, 3)(-1, 2)(-2, -3)(4, -4)$
then $p\delta = (1, 2, -3)(4)(-1, 3, -2)(-4)$. So $\sigma
=(1,2,3)(4)$ and $\eta = (1,1,-1,1)$. Thus 
\[
\mathop{\sum_{i_{\pm 1}, i_{\pm 2}, i_{\pm 3}, i_{\pm 4} =1
}}_% 
{\ker(i) \geq p}^N 
a^{(1)}_{i_1 i_{-1}} a^{(2)}_{i_2i_{-2}} a^{(3)}_{i_3i_{-3}} a^{(4)}_{i_4 i_{-4}}
= \Tr(A_1A_2A^\str_3)\Tr(A_4).
\]
Note that there is not a canonical choice of a
representative $\sigma$ because we have to choose one cycle
from each pair $c_i$ or $c'_i$. However because $\Tr(A_1
\cdots A_k) = \Tr(A_k^\str \cdots A_1^\str)$, the value of
$\Tr_\sigma\big(A_1^{(\eta_1)}, \dots, A_n^{(\eta_n)}\big)$
is independent of the choices.
\end{remark}

\begin{lemma}\label{lemma:3}\end{lemma}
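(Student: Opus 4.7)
The plan is to follow the same pattern as Lemma \ref{lemma:1}, but now with the constant matrices $A_1, \dots, A_n$ interleaved between the copies of $X$ inside the trace. First I would substitute $X = \frac{1}{\sqrt{2N}}(G + G^\str)$ and distribute the resulting sum, obtaining
\[
\E(\tr(XA_1 X A_2 \cdots X A_n)) = (2N)^{-n/2} N^{-1} \sum_{\epsilon \in \bZ_2^n} \E\bigl(\Tr(G^{(\epsilon_1)} A_1 \cdots G^{(\epsilon_n)} A_n)\bigr).
\]
Then, as in Notation \ref{notation:1}, I would write the trace on the right as an iterated sum over indices $j_{\pm 1}, \dots, j_{\pm n}$ with the cyclic constraint that the ``output'' index of each Gaussian factor is identified with the ``input'' of the adjacent $A$-matrix and the output of $A_k$ feeds into the next Gaussian factor.

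Next I would make the change of variable $i = j \circ \epsilon$, exactly as in the proof of Lemma \ref{lemma:1}; this converts every $G^{(\epsilon_k)}_{j_kj_{-k}}$ into $g_{i_ki_{-k}}$, while the entries $(A_k)_{j_{-k}\,j_{k+1}}$ acquire transpose superscripts according to the signs $\epsilon_k, \epsilon_{k+1}$, producing entries of $A_k^{(\epsilon_k\epsilon_{k+1})}$ (or some similarly-determined $A_k^{(\pm 1)}$, with the exact signs read off from how $\epsilon$ permutes the indices $\pm k$). Applying the Wick formula replaces $\E(g_{i_1i_{-1}} \cdots g_{i_ni_{-n}})$ by $\sum_{\pi \in \cP_2(n)} \mathds{1}_{\ker(i) \geq \pi\delta\pi\delta}$, so the whole expression becomes a sum over pairs $(\pi, \epsilon)$ of a constrained index sum in the $A$-entries.

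The crucial step is to invoke Remark \ref{remark:pair-hyperoctahedral}: the constrained index sum
\[
\mathop{\sum_{i_{\pm 1}, \dots, i_{\pm n}=1}^N}_{\ker(i) \geq \epsilon(\pi\delta\pi\delta)\epsilon}
(A_1^{(\cdot)})_{i_{-1}i_2}(A_2^{(\cdot)})_{i_{-2}i_3} \cdots (A_n^{(\cdot)})_{i_{-n}i_1}
\]
is exactly of the form covered by that remark, so it equals $\Tr_{\sigma}\bigl(A_1^{(\eta_1)}, \dots, A_n^{(\eta_n)}\bigr)$ for the permutation $\sigma \in S_n$ and sign vector $\eta \in \bZ_2^n$ read off from the pair decomposition $p\delta = c_1c_1'\cdots c_kc_k'$ of the pairing $p = \epsilon(\pi\delta\pi\delta)\epsilon$. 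Collecting the powers of $N$, each block of the partition contributes one factor via $\Tr_\sigma = N^{\#(\sigma)} \tr_\sigma$, and together with the prefactor $(2N)^{-n/2}N^{-1}$ the exponent of $N$ comes out to $\#(\epsilon\gamma\delta\gamma^{-1}\epsilon \vee \pi\delta\pi\delta) - (n/2 + 1)$, matching the formula of Lemma \ref{lemma:1} in the case where every $A_k$ is the identity.

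The routine part is the index-chasing bookkeeping; the main obstacle will be keeping the signs $\eta_k$ consistent across the $\epsilon$-substitution and the pair-decomposition step, since different cycle-representative choices in Remark \ref{remark:pair-hyperoctahedral} lead to the same $\Tr_\sigma$ only because of the identity $\Tr(A_1 \cdots A_k) = \Tr(A_k^\str \cdots A_1^\str)$. Once this is verified, the stated formula follows by linearity in the $A_k$ and a direct reading of the constraint on $\ker(i)$.
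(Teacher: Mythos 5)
Your overall strategy is right — imitate Lemma~\ref{lemma:1}, use the Wick formula, and then invoke Remark~\ref{remark:pair-hyperoctahedral} to identify the surviving constrained index sum as a $\Tr_\sigma$ of transposed $A$'s.  However, there is a genuine gap in the middle of your argument.  You perform only the single change of variables $i = j\circ\epsilon$ (the one used in Lemma~\ref{lemma:1}), and you claim that this already attaches transpose superscripts to the $A_k$'s as $A_k^{(\epsilon_k\epsilon_{k+1})}$.  That is not what happens: after the first change of variables the $A$-entries have no superscripts at all — only their subscripts are shuffled, giving $a^{(k)}_{\,j_{-\epsilon(k)}\,j_{\epsilon(k+1)}}$.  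More importantly, even after you apply Wick and restrict the sum, the expression you write down, namely a sum over $i$ with constraint $\ker(i) \geq \epsilon(\pi\delta\pi\delta)\epsilon$ and summands of the form $a^{(k)}_{i_{-k}\,i_{k+1}}$, is simply not in the shape to which Remark~\ref{remark:pair-hyperoctahedral} applies.  That remark requires the summand in the canonical form $a^{(1)}_{i_1 i_{-1}}\cdots a^{(n)}_{i_n i_{-n}}$ with the constraint $\ker(i) \geq p$ for a single pairing $p$.  Your summand indexes $A_k$ by $(i_{-k}, i_{k+1})$ instead of $(i_k,i_{-k})$, and your constraint has an extra conjugation by $\epsilon$ that does not arise from the Wick formula.

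The missing idea is a \emph{second} change of variables.  In the paper's proof one sets $j = i\circ\epsilon$ first (to clean up the Gaussian factors and turn the Wick constraint into $\ker(j)\geq\pi\delta\pi\delta$), leaving the $A$-entries as $a^{(k)}_{j_{-\epsilon(k)}\,j_{\epsilon(k+1)}}$, and then sets $i = j\circ\delta\gamma^{-1}\epsilon$, which cyclically re-indexes and conjugates by $\delta$ so that $A_k$ is indexed by $(i_k, i_{-k})$.  This second substitution transforms the constraint into $\ker(i)\geq \delta\gamma^{-1}\epsilon\pi\delta\pi\delta\epsilon\gamma\delta$, which is a genuine pairing of $[\pm n]$, and \emph{only then} does the sum match the template of Remark~\ref{remark:pair-hyperoctahedral}.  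The pair $(\sigma,\eta)$ — and hence the transpose superscripts on the $A$'s — are read off from the cycle decomposition of that pairing composed with $\delta$, which after simplification (using $\rho = \epsilon\pi\delta\pi\epsilon$) equals $\delta\gamma^{-1}\delta\,\rho\,\gamma$.  Without the second substitution you cannot legitimately invoke the Remark, so the step where you claim the constrained sum ``is exactly of the form covered by that remark'' does not hold as written.  Your final accounting of the powers of $N$ and the consistency check against Lemma~\ref{lemma:1} are fine, but they rest on the gap above.
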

Suppose that $X$ is the $N \times N$ \goe{} and $A_1, \dots,
A_n$ is a set of constant matrices. Then
\begin{equation}\label{eq:expansion_with_constants}
\E(\tr(XA_1\cdots XA_n)) =
\frac{N^{\#(\sigma) -(n/2+1)}}{2^{n/2}}
\mathop{\sum_{\pi \in \cP_2(n)}}_{\epsilon \in \bZ_2^n}
\tr_\sigma(A_1^{(\eta_1)}, \dots, A_n^{(\eta_n)})
\end{equation}
where $(\sigma, \eta)$ depends on the pair $(\pi, \epsilon)$
in the manner described in Remark
\ref{remark:pair-hyperoctahedral}.

\begin{proof}
We write $X = \frac{1}{\sqrt{2N}}(G + G^\str)$. We repeat
the calculation from Lemma \ref{lemma:1}, now with the $A$'s
inserted.

\begin{eqnarray*}\lefteqn{
\E(\Tr(XA_1 \cdots XA_n))} \\
& = &
(2N)^{-n/2}\sum_{\epsilon \in \bZ_2^n} 
\E(\Tr(G^{(\epsilon_1)}A_1
\cdots G^{(\epsilon_n)}A_n)) \\
& = &
(2N)^{-n/2}\sum_{\epsilon \in \bZ_2^n} 
\sum_{i_{\pm 1}, \dots, i_{\pm n}}
\E((G^{(\epsilon_1)})_{i_1i_{-1}}\cdots
(G^{(\epsilon_1)})_{i_ni_{-n}})\\
&& \qquad\mbox{}\times
a^{(1)}_{i_{-1}i_2} \cdots a^{(n)}_{i_{-n}i_1} \\
& = &
(2N)^{-n/2}\sum_{\epsilon \in \bZ_2^n}
\sum_{j_{\pm 1}, \dots, j_{\pm n}} 
\E(g_{j_1j_{-1}}\cdots
g_{j_nj_{-n}})\\
&& \qquad\mbox{}\times
a^{(1)}_{j_{-\epsilon(1)}j_{\epsilon(2)}} \cdots a^{(n)}_{j_{-\epsilon(n)}j_{\epsilon(1)}} \qquad\mbox{(letting\ } j= i \circ \epsilon)\\
& = &
(2N)^{-n/2}\sum_{\epsilon \in \bZ_2^n}
\sum_{j_{\pm 1}, \dots, j_{\pm n}} 
\sum_{\pi \in \cP_2(n)} \mathds{1}_{\ker(j) \geq \pi \delta \pi \delta}\\
&& \qquad\mbox{}\times
a^{(1)}_{j_{-\epsilon(1)}j_{\epsilon(2)}} \cdots a^{(n)}_{j_{-\epsilon(n)}j_{\epsilon(1)}} \\
&& \kern140pt\mbox{(then letting\ } i= j \circ \delta \gamma^{-1}\epsilon)\\
& = &
(2N)^{-n/2}\sum_{\pi,\epsilon}
\mathop{\sum_{i_{\pm 1}, \dots, i_{\pm n}}}_%
{\ker(i) \geq \delta \gamma^{-1}\epsilon \pi \delta \pi \delta \epsilon \gamma \delta} 
a^{(1)}_{i_1i_{-1}} \cdots a^{(n)}_{i_ni_{-n}}. \\
\end{eqnarray*}
Note that $\delta \gamma^{-1}\epsilon \pi \delta \pi \delta
\epsilon \gamma \delta$ is a pairing so that we can now
write the last term using Remark
\ref{remark:pair-hyperoctahedral}. Let $\rho = \epsilon \pi
\delta \pi \epsilon$. Then $\delta \gamma^{-1}\epsilon \pi
\delta \pi \delta \epsilon \gamma \delta = \delta
\gamma^{-1} \delta \rho \gamma \delta$. Hence $\delta
\gamma^{-1}\epsilon \pi \delta \pi \delta \epsilon \gamma
\delta \cdot \delta = \delta \gamma^{-1}\delta \cdot \rho
\cdot \gamma$. So by Remark
\ref{remark:pair-hyperoctahedral} there is a pair $(\sigma,
\eta) \in S_n \times \bZ_2^n$ such that
\[
\mathop{\sum_{i_{\pm 1}, \dots, i_{\pm n}}}_%
{\ker(i) \geq \delta \gamma^{-1}\epsilon \pi \delta \pi \delta \epsilon \gamma \delta} 
a^{(1)}_{i_1i_{-1}} \cdots a^{(n)}_{i_ni_{-n}}
= \Tr_\sigma(A_1^{(\eta_1)}, \dots, A_n^{(\eta_n)}).
\]
Hence
\[\E(\tr(XA_1\cdots XA_n)) =
\frac{N^{\#(\sigma) -(n/2+1)}}{2^{n/2}}
\mathop{\sum_{\pi \in \cP_2(n)}}_{\epsilon \in \bZ_2^n}
\tr_\sigma(A_1^{(\eta_1)}, \dots, A_n^{(\eta_n)})
\]
where $(\sigma, \eta)$ depends on the pair $(\pi, \epsilon)$
in the manner described in Remark
\ref{remark:pair-hyperoctahedral}. 
\end{proof}

In Remark \ref{remark:highest_order} we showed that
$\#(\epsilon \gamma \delta \gamma^{-1} \epsilon \vee \pi
\delta \pi \delta) \leq n/2 + 1$ with equality only if $\pi
\in NC_2(n)$ and $\epsilon_r = - \epsilon_s$ for all $(r, s)
\in \pi$. Moreover in this case $\rho = \epsilon \pi \delta
\pi \epsilon = \pi \delta \pi \delta$ so that
$\delta\gamma^{-1}\delta \rho \gamma = \pi \gamma \delta
\gamma^{-1}\pi \delta$ and hence $\sigma = \pi\gamma =
K(\pi)$ and $\eta \equiv 1$. Also for a given $\pi$ there
are $2^{n/2}$ choices of $\epsilon$ such that $\epsilon_r =
- \epsilon_s$ for all $(r, s) \in \pi$. Hence the highest
order, $O(1)$, term of $\E(\tr(XA_1\cdots XA_n))$ is
\[
\sum_{\pi \in NC_2(n)} 
\tr_{K(\pi)}(A_1, \dots, A_n).
\]
Under our assumption of the existence of an infinitesimal
limit (Eq. (\ref{eq:infintesimal_limit})) we have
\begin{multline*}
\lim_N N\Big( \sum_{\pi \in NC_2(n)} 
\tr_{K(\pi)}(A_1, \dots, A_n) - 
\sum_{\pi \in NC_2(n)} \phi_{K(\pi)}(a_1, \dots, a_n)\Big) \\
=
\sum_{\pi \in NC_2(n)}
\partial \phi_{K(\pi)}(a_1, \dots, a_n) \\
=
\sum_{\pi \in NC(n)} \kappa_\pi(x, \dots, x) \ 
\partial \phi_{K(\pi)}(a_1, \dots, a_n),
\end{multline*}
where the last equality holds because $x$ is a semi-circular
operator.

In Corollary \ref{cor:sub_leading_order} we showed that the
second highest order, $O(N^{-1})$, term in
Eq.~(\ref{eq:expansion_with_constants}) is when $\#(\epsilon
\gamma \delta \gamma^{-1} \epsilon \vee \pi \delta \pi
\delta) = n/2$ and this only occurs when $\rho \in
NC_2^\delta(n, -n)$. So this term is
\begin{equation}\label{eq:second_term}
2^{-n/2} \mathop{\sum_{\pi \in \cP_2(n)}}_{\epsilon \in \bZ_2^n}
\tr_\sigma(A_1^{(\eta_1)}, \dots, A_n^{(\eta_n)})
\end{equation}
where $(\pi, \epsilon)$ are such that $\#(\epsilon \gamma
\delta \gamma^{-1} \epsilon \vee \pi \delta \pi \delta) =
n/2$ and $(\sigma, \eta)$ depends on the pair $(\pi,
\epsilon)$ in the manner described in Remark
\ref{remark:pair-hyperoctahedral}. The element $\rho =
\epsilon \pi \delta \pi \epsilon$ produced from such a pair
is in $NC_2^\delta(n -n)$ and is independent of $\epsilon$
in the sense that for each pair $(r, s) \in \pi$, $\rho$
only depends on the product $\epsilon_r \epsilon_s$. There
are $2^{n/2}$ ways of choosing an $\epsilon$ for a fixed
assignment of signs $\epsilon_r \epsilon_s$ for each $(r, s)
\in \pi$. Moreover every $\rho \in NC_2^\delta(n, -n)$ can
be obtained from some pair $(\pi, \epsilon)$. To see this
start with a $\rho \in NC_2^\delta(n, -n)$ and for each pair
$(r, s) \in \rho$ let $(|r|, |s|)$ be a pair of
$\pi$. Because of the symmetry $\delta\rho\delta = \rho$
each $(|r|, |s|)$ will appear twice. Choose $\epsilon$ so
that for each $(r,s) \in \rho$ we have $\epsilon_r
\epsilon_s = -1$ if $(r, s)$ is not a through string of
$\rho$ and $\epsilon_r \epsilon_s = 1$ if $(r, s)$ is a
through string of $\rho$. Thus we may write
Eq.~(\ref{eq:second_term}) as
\[
\sum_{\rho \in NC_2^\delta(n, -n)}
\tr_\sigma(A_1^{(\eta_1)}, \dots, A_n^{(\eta_n)}).
\]
and as $N \rightarrow \infty$ we get
\[
\sum_{\rho \in NC_2^\delta(n, -n)}
\phi_\sigma(a_1^{(\eta_1)}, \dots, a_n^{(\eta_n)}).
\]
Putting these two terms together we get
\begin{multline*}
\phi'(xa_1 \cdots xa_n) \\
= \lim_N N\big( \E(\tr(XA_1 \cdots XA_n))
- \sum_{\pi \in NC_2(n)} \phi_{K(\pi)}(a_1, \dots, a_n) \big)\\
= \sum_{\pi \in NC_2(n)}
\partial \phi_{K(\pi)}(a_1, \dots, a_n)
+
\sum_{\rho \in NC_2^\delta(n, -n)}
\phi_\sigma(a_1^{(\eta_1)}, \dots, a_n^{(\eta_n)}).
\end{multline*}

\bigskip

\begin{notation}
Given $\rho \in NC_2^\delta(n,-n)$ we construct $(\sigma,
\eta)$ as in Remark \ref{remark:pair-hyperoctahedral} and we
denote $\phi_\sigma(a_1^{(\eta_1)}, \dots, a_n^{(\eta_n)})$
by $\phi_{K^\delta(\rho)}(a_1, \dots, a_n)$. The
justification for this notation is that the pair $(\sigma,
\eta)$ comes from the cycles of $\delta \gamma^{-1} \delta
\rho \gamma$ which can be thought of as a type $B$ Kreweras
complement. See Figure \ref{fig:delta_Kreweras_complement}. 
\end{notation}

\setbox1=\hbox{\includegraphics[scale=0.9]{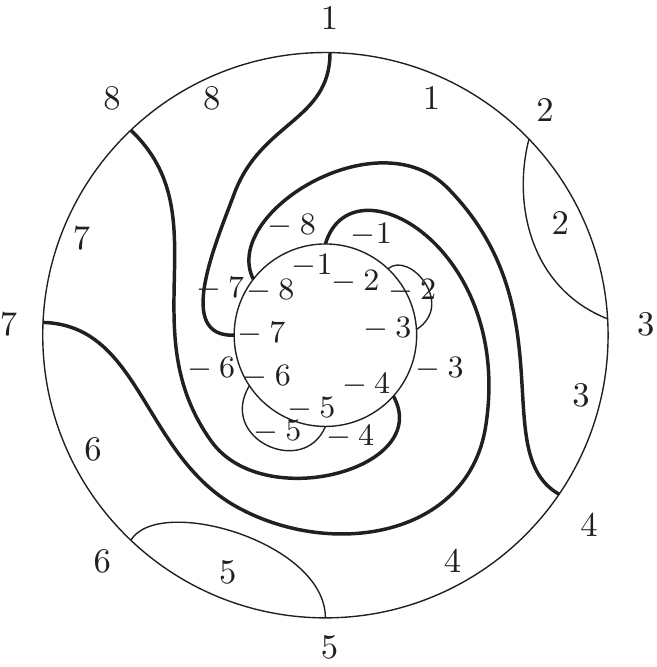}}

\begin{figure}[t]
\label{fig:delta_Kreweras_complement}
$\vcenter{\hsize=\wd1\box1}$
\hfill\hbox{}
$\vcenter{\hsize=180pt\raggedright\small
{\sc Figure} \refstepcounter{figure}\thefigure. 
If we let $\pi = (1,7)(2,3)(4,8)(5,6)\in \cP_2(8)$ and
$\epsilon = (1,1,\ab -1,1,1,-1,1,1)$, then $\rho = \epsilon
\pi \delta \pi \epsilon =
(1,-7)(2,3)\ab(4,-8)\ab(5,6)(-1,7)\ab
(-2,-3)(-4,8)(-5,-6)$. We compute $\delta \gamma^{-1} \delta
\rho \gamma = (1, 3, -7)(2) \ab (4, 6, -8) \ab (5)\ab (-1,
7, -3)\ab (-2)\ab (-4, 8,-6)\ab (-5)$. Then $\sigma =
(1,3,7)(2)(4,6,8)(5)$ and $\eta = (1,1,1,1,1,1,-1,-1)$. We
have $\phi_{K^\delta(\rho)}(a_1, a_2, a_3, a_4, a_5, a_6, a_7,
a_8)$ $= \phi(a_1a_3a_7^\str) \phi(a_2) \phi(a_4 a_6 a_8^\str)
\phi(a_5)$.}$
\end{figure}

\begin{theorem}\label{thm:lack_inf_free}
Suppose that $X$ is the $N \times N$ \goe{} and $A_1, \dots,
A_n$ is a set of constant matrices such that the $A$'s have
a joint infinitesimal limit distribution and $A_1, \dots,
A_n, A_1^\str, \dots, A_n^\str$ also have a joint limit
distribution. Then
\[
\lim_N N( \E(\tr(XA_1 \cdots XA_n)) -
\sum_{\pi \in NC(n)} \kappa_\pi(x, \dots, x)
\phi_{K(\pi)}(a_1, \dots, a_n))
\]
\[
=
\sum_{\pi \in NC(n)} \kappa_\pi(x, x, \dots, x)
\partial\phi_{K(\pi)}(a_{i_1}, a_{i_2}, \dots a_{i_n})
\]
\[
\mbox{} +
\sum_{\rho \in NC_2^\delta(n, -n)}
\kappa_\rho(x, \dots, x)
\phi_{K^\delta(\rho)}(a_1, \dots, a_n).
\]
\end{theorem}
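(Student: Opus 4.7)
The plan is to isolate the $O(1)$ and $O(N^{-1})$ terms in the $1/N$-expansion of $\E(\tr(XA_1\cdots XA_n))$ supplied by Lemma \ref{lemma:3}, and to match each with one of the two sums on the right-hand side. Lemma \ref{lemma:3} expresses $\E(\tr(XA_1\cdots XA_n))$ as a sum over pairs $(\pi,\epsilon) \in \cP_2(n) \times \bZ_2^n$, the $(\pi,\epsilon)$-term carrying the factor $2^{-n/2} N^{\#(\sigma) - (n/2+1)}$ where $(\sigma, \eta)$ is determined from $(\pi,\epsilon)$ as in Remark \ref{remark:pair-hyperoctahedral}. I would stratify this sum by the value of the exponent $\#(\sigma) - (n/2+1) \leq 0$.

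The leading stratum (exponent $0$) is, by Remark \ref{remark:highest_order}, the set of pairs with $\pi \in NC_2(n)$ and $\epsilon_r = -\epsilon_s$ for every $(r,s)\in\pi$; for these $\sigma = K(\pi)$ and $\eta \equiv 1$. The $2^{n/2}$ compatible sign choices cancel the $2^{-n/2}$ prefactor, so this stratum contributes $\sum_{\pi \in NC_2(n)} \tr_{K(\pi)}(A_1,\dots,A_n)$, which converges under the joint-limit hypothesis to $\sum_{\pi \in NC_2(n)} \phi_{K(\pi)}(a_1,\dots,a_n)$. Since $x$ is standard semicircular, $\kappa_\pi(x,\dots,x)$ is $1$ on $NC_2(n)$ and $0$ elsewhere, so this equals $\sum_{\pi \in NC(n)} \kappa_\pi(x,\dots,x)\, \phi_{K(\pi)}(a_1,\dots,a_n)$; this identifies the $O(1)$ term being subtracted on the left-hand side of the theorem.

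After subtraction and multiplication by $N$, two sources contribute to the limit. First, within the leading stratum, the joint infinitesimal hypothesis on $A_1,\dots,A_n$ (and their transposes), combined with the Leibniz rule applied to the product over cycles of $K(\pi)$, yields
\[
N\big(\tr_{K(\pi)}(A_1,\dots,A_n) - \phi_{K(\pi)}(a_1,\dots,a_n)\big) \to \partial\phi_{K(\pi)}(a_1,\dots,a_n),
\]
whose sum over $\pi \in NC_2(n)$ becomes, once extended to $NC(n)$ by semicircularity, the first sum of the theorem. Second, by Corollary \ref{cor:sub_leading_order}, the only other pairs contributing at order $N^{-1}$ in Lemma \ref{lemma:3} are those $(\pi,\epsilon)$ for which $\rho := \epsilon\pi\delta\pi\epsilon$ lies in $NC_2^\delta(n,-n)$. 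As noted in the paragraph preceding the statement, each such $\rho$ arises from exactly $2^{n/2}$ pairs $(\pi,\epsilon)$---they differ only by sign flips that preserve every product $\epsilon_r\epsilon_s$---so the $2^{-n/2}$ prefactor is absorbed, and by definition the resulting $\tr_\sigma(A_1^{(\eta_1)},\dots,A_n^{(\eta_n)})$ is $\tr_{K^\delta(\rho)}$. Its limit is $\phi_{K^\delta(\rho)}(a_1,\dots,a_n)$, and using $\kappa_\rho(x,\dots,x)=1$ for the pair partition $\rho$ produces the second sum exactly as stated.

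The main obstacle is the bookkeeping in this last step: checking that $(\pi,\epsilon) \mapsto \rho = \epsilon\pi\delta\pi\epsilon$ is genuinely $2^{n/2}$-to-one onto $NC_2^\delta(n,-n)$ and that the associated $(\sigma,\eta)$ depends only on $\rho$, independent of the choice of cycle representatives in the pair decomposition of Remark \ref{remark:pair-hyperoctahedral}. This rests on the symmetry $\delta\rho\delta=\rho$, conditions (\textit{i})--(\textit{iii}) of the definition of $NC_2^\delta(n,-n)$, and the identity $\Tr(A_1\cdots A_k)=\Tr(A_k^\str\cdots A_1^\str)$ already invoked in Remark \ref{remark:pair-hyperoctahedral}. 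Once these are in hand, the strata with exponent $\leq -2$ automatically drop out in the limit and the claimed formula follows.
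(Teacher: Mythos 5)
Your proposal reproduces the paper's own argument step for step: isolate the leading $O(1)$ stratum via Remark~\ref{remark:highest_order} (with $\sigma=K(\pi)$, $\eta\equiv1$, and the $2^{n/2}$ sign choices absorbing the prefactor), use the joint infinitesimal limit hypothesis and Leibniz rule to get the $\partial\phi_{K(\pi)}$ contribution, and invoke Corollary~\ref{cor:sub_leading_order} together with the $2^{n/2}$-to-one correspondence $(\pi,\epsilon)\mapsto\rho\in NC_2^\delta(n,-n)$ to produce the $\phi_{K^\delta(\rho)}$ term. This matches the paper's proof in structure, decomposition, and supporting lemmas, so the proposal is correct and takes essentially the same route.
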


\section{concluding remarks}\label{sec:concluding_remarks}
\begin{remark}\label{remark:possible_result}
By writing the second term as a sum over $NC_2^\delta(n, -n)$ we do not need to have $\partial\kappa_\rho$ as in equation (\ref{eq:inf_free_eqn}) on p. \pageref{eq:inf_free_eqn}. 
However, since there is a bijective map from $NCC_2(n)$ to
$NC_2^\delta(n,\ab -n)$ (see the paragraph above and Remark
\ref{remark:bijection}) and $\partial\kappa_\pi(x, \dots, x)
\not= 0$ only for elements of $NCC_2(n)$ (\textit{c.f.}
Eq.~(\ref{eq:inf_cum_part_goe}) on p. \pageref{eq:inf_cum_part_goe}), there should be way of
writing the second term above as a sum over $NC(n)$. This
would make it closer to the equation (\ref{eq:inf_free_eqn}) 
 for infinitesimal
freeness.
\end{remark} 

\begin{remark}
As we have seen, the fact that the genus expansion for the
complex Wishart means that the infinitesimal cumulants are
fairly simple: $\kappa'_n = c'$ for all $n$. In a follow-up
paper we shall compute the infinitesimal cumulants of a real
Wishart matrix. We get the $c'$ term as above plus a
polynomial in $c$.
\end{remark}

\thebottomline
\begin{thebibliography}{XX}


\bibitem{amsw} C.~Armstrong, J.~A.~Mingo, R.~Speicher,
  J.~C.~H.~Wilson, The Non-Commutative Cycle Lemma,
  \textit{J. of Comb. Thry.}, Ser. A \textbf{117}
  (2010). 1158-1166.

\bibitem{bai_s} Z.~D.~Bai and J.~Silverstein,
  \textit{Analysis of Large Dimensional Random Matrices},
  Springer Series in Statistics, 2009.

\bibitem{bs} S.~Belinschi and D.~Shlyakhtenko, Free
  Probability of Type B: Analytic Interpretation and
  Applications, \textit{Amer. J. Math.} \textbf{134} (2012),
  193-234.

\bibitem{bgn} P.~Biane, F.~Goodman, and A.~Nica,
  Non-crossing Cumulants of Type B,
  \textit{Trans. Amer. Math. Soc.} \textbf{355} (2003),
  2263-2303.

\bibitem{cc} M. Capitaine, M. Casalis. 
Asymptotic freeness by generalized moments for Gaussian and Wishart matrices.
Application to Beta random matrices, \textit{Indiana Univ. Math. J.}
\textbf{53} 2004, 397-431. 

\bibitem{cmss} B. Collins, J. A. Mingo,
  P. \'Sniady, and R. Speicher, \newblock{Second Order
    Freeness and Fluctuations of Random Matrices:
    III. Higher Order Freeness and Free Cumulants},
  \newblock{\em Documenta Math.}, \textbf{12} (2007), 1-70.

\bibitem{de} I.~Dumitriu and A.~Edelman, Global Spectrum
  fluctuations for the $\beta$-Hermite and $\beta$-Laguerre
  ensembles via matrix models, \textit{J.~Math. Phy.},
  \textbf{47}, 063302 (2006).


\bibitem{em} N.~Enriquez and L.~M\'enard, Asymptotic
  Expansion of the Expected Spectral Measure of Wigner
  Matrices, \textit{Electron. Commun. Probab.} \textbf{21}
  (2016), no. 58, 1-11.

\bibitem{f} M.~F\'evrier, Higher order infinitesimal
  freeness. \textit{Indiana Univ. Math. J.} \textbf{61}
  (2012), 249-295.

\bibitem{fn} M.~F\'evrier and A.~Nica, Infinitesimal
  non-crossing cumulants and free probability of type B,
  \textit{J.~Funct.~Anal.} \textbf{258} (2010), 2983-3023.

\bibitem{gj} I.~Goulden and D.~Jackson, Maps in Locally Orientable
Surfaces and Integrals over Real Symmetric Surfaces,
\textit{Can. J. Math.}, \textbf{49} (1997), 865-882.

\bibitem{krew} G.~Kreweras, Sur les partitions non
  crois\'ees d'un cycle, \textit{Discrete Math.} \textbf{1}
  (1972), 333-350.

\bibitem{kj} K. Johansson, On fluctuations of eigenvalues of
  random Hermitian matrices, \textit{Duke Math. J.}
  \textbf{91} (1998), 151-204.

\bibitem{kms} T. Kusalik, J. A. Mingo, and R. Speicher,
  Orthogonal polynomials and fluctuations of random
  matrices, {\em J. Reine Angew.  Math.}, {\bf 604} (2007),
  1 - 46.

\bibitem{l} M.~Ledoux, A recursion formula for the moments
  of the Gaussian orthogonal ensemble, \textit{Annales de
    l'I.~H.~P.--Probabilit\'es et Statistiques},
  \textbf{45} (2009), 754-769.

\bibitem{mn} J.~A.~Mingo and A.~Nica, Annular Noncrossing
  Permutations and Partitions, and Second Order Asymptotics
  for Random Matrices, \textit{Int. Math. Res. Not.} 2004, 
  no. 28, 1413-1460.

\bibitem{mp} J.~A.~Mingo and M.~Popa, Real second order
  freeness and Haar orthogonal matrices,
  \textit{J. Math. Phy.} \textbf{54} (2013), 051701, 1-35.


\bibitem{ms} J.~A.~Mingo and R.~Speicher, \newblock {Second
  Order Freeness and Fluctuations of Random Matrices: I.
  Gaussian and Wishart matrices and Cyclic Fock spaces},
  \newblock{\em J. Funct. Anal.}, {\bf 235}, (2006),
  226-270.

\bibitem{ms2017} J.~A.~Mingo and R.~Speicher, \textit{Free 
Probability and Random Matrices}, Fields Institute Communications
\textbf{35}, Springer Nature, 2017.

\bibitem{mst} J.~A.~Mingo, E.~Tan, and R.~Speicher, Second
  Order Cumulants of Products,
  \textit{Trans. Amer. Math. Soc.} \textbf{361} (2009),
  4571-4781.
  
\bibitem{ns} A. Nica and R. Speicher, \textit{Lectures on
  the Combinatorics of Free Probability}, Cambridge
  Univ. Press, 2006.

\bibitem{r} C. E. I. Redelmeier, Real second-order
  freeness and the asymptotic real second-order freeness of
  several real matrix ensembles,
  \textit{Int. Math. Res. Not.}  \textbf{2014}, no. 12,
  pp. 3353-3395.  

\bibitem{s} D.~Shlyakhtenko, Free Probability of Type B and
  Asymptotics of Finite Rank Perturbations of Random
  Matrices, \textit{Indiana Univ.~Math.~J.} \textbf{67} 
  (2018), 971-991.


\bibitem{voi} D.~Voiculescu, Limit laws for random
  matrices and free products, \textit{Invent. Math.}
  \textbf{104} (1991), 201--220.
  
  
\bibitem{vdn} D.-V.~Voiculescu, K.~Dykema, A.~Nica,
  \textit{Free Random Variables}, CRM Monograph Series,
  \textbf{1}, 1992.


\end{thebibliography}
\end{document}